\documentclass[12pt]{amsart}
\usepackage{graphicx}
\usepackage{amsmath}
\usepackage{amssymb}
\usepackage{setspace}
\usepackage{amsthm}
\marginparwidth -1cm \oddsidemargin 0cm \evensidemargin 0cm
\topmargin 0pt \textheight 230mm \textwidth 169mm

\vfuzz2pt 
\hfuzz2pt 
\newtheorem{thm}{Theorem}[section]
\newtheorem{cor}[thm]{Corollary}

\newtheorem{lem}[thm]{Lemma}

\theoremstyle{definition}
\newtheorem{defn}[thm]{Definition}
\theoremstyle{remark}
\newtheorem{rem}[thm]{Remark}
\theoremstyle{conclusion}

\theoremstyle{conjecture}

\numberwithin{equation}{section}


\begin{document}
\title[Liouville type theorems in exterior domains]{Liouville type theorems for elliptic equations with Dirichlet conditions in exterior domains}

\author{Wei Dai, Guolin Qin}

\address{School of Mathematical Sciences, Beihang University (BUAA), Beijing 100083, P. R. China, and LAGA, UMR 7539, Institut Galil\'{e}e, Universit\'{e} Sorbonne Paris Nord, 93430 - Villetaneuse, France}
\email{weidai@buaa.edu.cn}

\address{Institute of Applied Mathematics, Chinese Academy of Sciences, Beijing 100190, and University of Chinese Academy of Sciences, Beijing 100049, P. R. China}
\email{qinguolin18@mails.ucas.ac.cn}

\thanks{Wei Dai is supported by the NNSF of China (No. 11971049 and 11501021), the Fundamental Research Funds for the Central Universities and the State Scholarship Fund of China (No. 201806025011).}

\begin{abstract}
In this paper, we are mainly concerned with the Dirichlet problems in exterior domains for the following elliptic equations:
\begin{equation}\label{GPDE0}
  (-\Delta)^{\frac{\alpha}{2}}u(x)=f(x,u) \,\,\,\,\,\,\,\,\,\,\,\, \text{in} \,\,\,\, \Omega_{r}:=\{x\in\mathbb{R}^{n}\,|\,|x|>r\}
\end{equation}
with arbitrary $r>0$, where $n\geq2$, $0<\alpha\leq 2$ and $f(x,u)$ satisfies some assumptions. A typical case is the Hardy-H\'{e}non type equations in exterior domains. We first derive the equivalence between \eqref{GPDE0} and the corresponding integral equations
\begin{equation}\label{GIE0}
	u(x)=\int_{\Omega_{r}}G_{\alpha}(x,y)f(y,u(y))dy,
\end{equation}
where $G_{\alpha}(x,y)$ denotes the Green's function for $(-\Delta)^{\frac{\alpha}{2}}$ in $\Omega_{r}$ with Dirichlet boundary conditions. Then, we establish Liouville theorems for \eqref{GIE0} via the method of scaling spheres developed in \cite{DQ0} by Dai and Qin, and hence obtain the Liouville theorems for \eqref{GPDE0}. Liouville theorems for integral equations related to higher order Navier problems in $\Omega_{r}$ are also derived.
\end{abstract}
\maketitle {\small {\bf Keywords:} The method of scaling spheres, Hardy-H\'{e}non type equations, Liouville theorems, nonnegative solutions, exterior domains. \\

{\bf 2010 MSC} Primary: 35B53; Secondary: 35J30, 35J91.}

\section{Introduction}

In this paper, we investigate the Liouville property of nonnegative solutions to the following Dirichlet problems for elliptic equations in exterior domains
\begin{equation}\label{GPDE}\\\begin{cases}
(-\Delta)^{\frac{\alpha}{2}}u(x)=f(x,u(x)), \,\,\,\,\,\,\, u(x)\geq0, \,\,\,\,\,\,\,\, x\in\Omega_{r}, \\
u(x)\equiv0,\,\,\,\,\,\,\,\, x\in\mathbb{R}^{n}\setminus\Omega_{r},
\end{cases}\end{equation}
where the exterior domains $\Omega_{r}:=\{x\in\mathbb{R}^{n}\,|\,|x|>r\}$ with arbitrary $r>0$, $n\geq2$, $0<\alpha\leq2$ and the nonlinear terms $f:\, \Omega_{r}\times\overline{\mathbb{R}_{+}}\rightarrow \overline{\mathbb{R}_{+}}$. When $0<\alpha<2$, the nonlocal fractional Laplacians $(-\Delta)^{\frac{\alpha}{2}}$ is defined by
\begin{equation}\label{nonlocal defn}
  (-\Delta)^{\frac{\alpha}{2}}u(x)=C_{\alpha,n} \, P.V.\int_{\mathbb{R}^n}\frac{u(x)-u(y)}{|x-y|^{n+\alpha}}dy:=C_{\alpha,n}\lim_{\epsilon\rightarrow0}\int_{|y-x|\geq\epsilon}\frac{u(x)-u(y)}{|x-y|^{n+\alpha}}dy
\end{equation}
for functions $u\in C^{1,1}_{loc}\cap\mathcal{L}_{\alpha}(\mathbb{R}^{n})$, where the constant $C_{\alpha,n}=\left(\int_{\mathbb{R}^{n}}\frac{1-\cos(2\pi\zeta_{1})}{|\zeta|^{n+\alpha}}d\zeta\right)^{-1}$ and the function spaces
\begin{equation}\label{0-1}
  \mathcal{L}_{\alpha}(\mathbb{R}^{n}):=\Big\{u: \mathbb{R}^{n}\rightarrow\mathbb{R}\,\Big|\,\int_{\mathbb{R}^{n}}\frac{|u(x)|}{1+|x|^{n+\alpha}}dx<\infty\Big\}.
\end{equation}
For $0<\alpha<2$, we assume the solution $u\in C_{loc}^{1,1}(\Omega_{r})\cap C(\overline{\Omega_{r}})\cap\mathcal{L}_{\alpha}(\mathbb{R}^{n})$. For $\alpha=2$, we assume the solution $u\in C^{2}(\Omega_{r})\cap C(\overline{\Omega_{r}})$.

We say equations \eqref{GPDE} have critical order if $\alpha=n$ and non-critical order if $0<\alpha<n$. The following definitions and assumptions on the nonlinear terms $f(x,u)$ will be necessary.
\begin{defn}\label{defn1}
In the non-critical order cases $0<\alpha<n$, we say that the nonlinear term $f$ has subcritical growth provided that
\begin{equation}\label{e1}
  \mu^{\frac{n+\alpha}{n-\alpha}}f(\mu^{\frac{2}{n-\alpha}}x,\mu^{-1}u)
\end{equation}
is strictly increasing with respect to $\mu\geq1$ or $\mu\leq1$ for all $(x,u)\in\Omega_{r}\times\mathbb{R}_{+}$. In the critical order cases $\alpha=n$, we say that the nonlinear term $f$ has subcritical growth, provided that
\begin{equation}\label{e1}
  \mu^{n}f(\mu x,u)
\end{equation}
is strictly increasing with respect to $\mu\geq1$ or $\mu\leq1$ for all $(x,u)\in\Omega_{r}\times\mathbb{R}_{+}$.
\end{defn}
\begin{defn}\label{defn2}
A function $g(x,u)$ is called locally Lipschitz on $u$ in $\Omega_{r}\times\overline{\mathbb{R}_{+}}$, provided that for any $u_{0}\in\overline{\mathbb{R}_{+}}$ and $\omega\subseteq\Omega_{r}$ bounded, there exists a (relatively) open neighborhood $U(u_{0})\subset\overline{\mathbb{R}_{+}}$ such that $g$ is Lipschitz continuous on $u$ in $\omega\times U(u_{0})$.
\end{defn}

We need the following three assumptions on the nonlinear terms $f(x,u)$. \\
$(\mathbf{f_{1}})$ The nonlinear term $f(x,u)$ is non-decreasing about $u$ in $\Omega_{r}\times\overline{\mathbb{R}_{+}}$, namely,
\begin{equation}\label{e2}
  (x,u), \, (x,v)\in\Omega_{r}\times\overline{\mathbb{R}_{+}} \,\,\, \text{with} \,\,\, u\leq v \,\,\, \text{implies} \,\,\, f(x,u)\leq f(x,v).
\end{equation}
$(\mathbf{f_{2}})$ There exists a $\theta<\frac{\alpha}{n}$ such that, $(|x|-r)^{\theta}f(x,u)$ is locally Lipschitz on $u$ in $\Omega_{r}\times\overline{\mathbb{R}_{+}}$. \\
$(\mathbf{f_{3}})$ There exist a cone $\mathcal{C}$ with vertex at $0$, constants $\overline{C}>0$, $\sigma>1$, $-\alpha<\tau<+\infty$ and $0<p<\frac{n+\alpha+2\tau}{n-\alpha}$ if $0<\alpha<n$, or $-\alpha\leq\tau<+\infty$ and $0<p<+\infty$ if $\alpha=n$ such that, the nonlinear term
\begin{equation}\label{e3}
  f(x,u)\geq \overline{C}|x|^{\tau}u^{p}
\end{equation}
in $(\mathcal{C}\cap\Omega_{2\sigma r})\times\overline{\mathbb{R}_{+}}$.

\begin{rem}\label{rem0}
In particular, assume $0\leq b<+\infty$, $-b-\alpha<a<+\infty$ and $1\leq p<\frac{n+\alpha+2(a+b)}{n-\alpha}$ if $0<\alpha<n$, or $-b-\alpha\leq a<+\infty$ and $1\leq p<+\infty$ if $\alpha=n$, then
\begin{equation}\label{0-2}
  f(x,u)=|x|^{a}(|x|-r)^{b}u^{p}
\end{equation}
is subcritical and satisfies all the assumptions $(\mathbf{f_{1}})$, $(\mathbf{f_{2}})$ and $(\mathbf{f_{3}})$. Moreover, under the same assumptions, for any $i=1,\cdots,n$, nonlinearities
\begin{equation}\label{0-3}
  f(x,u)=|x_{i}|^{a}(|x|-r)^{b}u^{p}, \quad |x|^{a}(|x_{i}|-r)^{b}u^{p} \quad \text{or} \quad |x_{i}|^{a}(|x_{i}|-r)^{b}u^{p}
\end{equation}
are also subcritical and satisfy all the assumptions $(\mathbf{f_{1}})$, $(\mathbf{f_{2}})$ and $(\mathbf{f_{3}})$.
\end{rem}

For $0<\alpha\leq n$, PDEs of the form
\begin{equation}\label{PDE}
  (-\Delta)^{\frac{\alpha}{2}}u(x)=|x|^{a}u^{p}(x)
\end{equation}
are called the fractional order or higher order H\'{e}non, Lane-Emden, Hardy equations for $a>0$, $a=0$, $a<0$, respectively. These equations have numerous important applications in conformal geometry and Sobolev inequalities. In particular, in the case $a=0$, \eqref{GPDE} becomes the well-known Lane-Emden equation, which models many phenomena in mathematical physics and in astrophysics.

The nonlinear terms in \eqref{PDE} is called critical if $p=p_{s}(a):=\frac{n+\alpha+2a}{n-\alpha}$ ($:=+\infty$ if $n=\alpha$), subcritical if $0<p<p_{s}(a)$ and supercritical if $p_{s}(a)<p<+\infty$. Liouville type theorems for equations \eqref{PDE} (i.e., nonexistence of nontrivial nonnegative solutions) in the whole space $\mathbb{R}^n$, the half space $\mathbb{R}^n_+$ and bounded domains $\Omega$ have been extensively studied (see \cite{BG,BP,CD,CDQ,CFY,CL,CLL,CLZ,DPQ,DQ1,DQ0,DQ,DQ2,DQZ,DZ,GS,Lei,Lin,MP,PS,RW,RZ,WX} and the references therein). For other related properties on PDEs \eqref{PDE} and Liouville type theorems on systems of PDEs of type \eqref{PDE} with respect to various types of solutions (e.g., stable, radial, singular, nonnegative, sign-changing, $\cdots$), please refer to \cite{BG,CD,CGS,CLO,DFQ,DQ1,DQ,FG,LB,Lei,Lin,M,PQS,WX} and the references therein. These Liouville theorems, in conjunction with the blowing up and re-scaling arguments, are crucial in establishing a priori estimates and hence existence of positive solutions to non-variational boundary value problems for a class of elliptic equations on bounded domains or on Riemannian manifolds with boundaries (see \cite{CDQ,DPQ,DQ0,DQ2,GS1,MP,PQS,RW}).

In this paper, by applying the method of scaling spheres developed in \cite{DQ0}, we will establish Liouville theorems for nonnegative solutions of the generalized equations \eqref{GPDE} with Dirichlet boundary conditions in unbounded exterior domains.

First, by using similar arguments as in \cite{CFY,ZCCY} (see also \cite{DFQ,DQ0}), we can deduce the equivalence between PDEs \eqref{GPDE} and the following integral equations
\begin{equation}\label{IEe}
  u(x)=\int_{\Omega_{r}}G_{\alpha}(x,y)f(y,u(y))dy,
\end{equation}
where
\begin{equation}\label{GREENe-0}
  G_{\alpha}(x,y):=\frac{C_{n,\alpha}}{|x-y|^{n-\alpha}}\int_{0}^{\frac{(|x|^{2}-r^{2})(|y|^{2}-r^{2})}{r^{2}|x-y|^{2}}}\frac{b^{\frac{\alpha}{2}-1}}{(1+b)^{\frac{n}{2}}}db \quad\quad \text{if} \,\,\, x,y\in \Omega_{r},
\end{equation}
and $G_{\alpha}(x,y):=0$ if $x$ or $y\in\mathbb{R}^{n}\setminus \Omega_{r}$ is the Green's function in exterior domain $\Omega_{r}$ for $(-\Delta)^{\frac{\alpha}{2}}$ with Dirichlet boundary conditions when $0<\alpha\leq2$ and $n\geq2$. That is, we have the following theorem.
\begin{thm}\label{equivalence}
If $u$ is a nonnegative solution of \eqref{GPDE}, then $u$ is also a nonnegative solution of integral equation \eqref{IEe}, and vice versa.
\end{thm}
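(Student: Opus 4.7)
My plan is to prove the equivalence by establishing the two implications separately: the direction (IE) $\Rightarrow$ (PDE) is essentially an application of the defining property of the Green's function, while the converse (PDE) $\Rightarrow$ (IE) reduces to a uniqueness statement for $\alpha$-harmonic functions in $\Omega_{r}$ with zero exterior data.

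For (IE) $\Rightarrow$ (PDE), I would apply $(-\Delta)^{\alpha/2}$ to both sides of \eqref{IEe}. Since $G_{\alpha}$ is the Green's function for $(-\Delta)^{\alpha/2}$ on $\Omega_{r}$, one has $(-\Delta)^{\alpha/2}_{x}G_{\alpha}(\cdot,y)=\delta_{y}$ distributionally in $\Omega_{r}$, with $G_{\alpha}(x,y)=0$ for $x\in\mathbb{R}^{n}\setminus\Omega_{r}$; formally passing $(-\Delta)^{\alpha/2}$ inside the integral then yields \eqref{GPDE} in $\Omega_{r}$, and the Dirichlet condition on $\mathbb{R}^{n}\setminus\Omega_{r}$ comes for free from the support property of $G_{\alpha}$. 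To justify the interchange rigorously, I would verify that the integrand $G_{\alpha}(x,y)f(y,u(y))$ has the required integrability by using the explicit expression \eqref{GREENe-0}, the regularity $u\in C^{1,1}_{loc}(\Omega_{r})\cap\mathcal{L}_{\alpha}(\mathbb{R}^{n})$, and hypothesis $(\mathbf{f_{2}})$ which controls the singular weight $(|y|-r)^{-\theta}$ with $\theta<\alpha/n$.

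For the converse (PDE) $\Rightarrow$ (IE), I would first set
\[
v(x) := \int_{\Omega_{r}} G_{\alpha}(x,y)\, f(y,u(y))\, dy,
\]
check that $v$ is well-defined and nonnegative, and use the argument of the previous paragraph (with $v$ in place of $u$) to conclude that $v$ also solves \eqref{GPDE}. The function $w:=u-v$ then satisfies $(-\Delta)^{\alpha/2}w=0$ in $\Omega_{r}$ and $w\equiv0$ in $\mathbb{R}^{n}\setminus\Omega_{r}$. Convergence of the defining integral follows from the asymptotics $G_{\alpha}(x,y)\lesssim|x-y|^{\alpha-n}$ together with the growth control on $f(\cdot,u)$ afforded by $u\in\mathcal{L}_{\alpha}(\mathbb{R}^{n})$ and $(\mathbf{f_{2}})$.

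The principal obstacle is the last step: to show $w\equiv0$ on all of $\mathbb{R}^{n}$, i.e.\ to rule out nontrivial $\alpha$-harmonic functions on $\Omega_{r}$ with zero exterior data within the class $C^{1,1}_{loc}(\Omega_{r})\cap\mathcal{L}_{\alpha}(\mathbb{R}^{n})$. I would address this by applying the maximum principle on the truncated region $\Omega_{r}\cap B_{R}$: since $w$ is $\alpha$-harmonic there and vanishes on $B_{r}$, its extrema are controlled by averages of $w$ against the Poisson kernel of $\Omega_{r}\cap B_{R}$ evaluated on $\mathbb{R}^{n}\setminus B_{R}$, and the membership $w\in\mathcal{L}_{\alpha}$ (inherited from the hypotheses on $u$ and the Green-function bounds on $v$) forces these boundary contributions to vanish as $R\to\infty$. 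A cleaner alternative would be to apply the Kelvin transform $x\mapsto r^{2}x/|x|^{2}$, which sends $\Omega_{r}$ to the punctured ball $B_{r}\setminus\{0\}$ and preserves $\alpha$-harmonicity, and then invoke a removable-singularity/uniqueness result for the resulting bounded-domain problem in the spirit of \cite{CFY,ZCCY,DFQ,DQ0}.
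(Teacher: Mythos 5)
The paper gives no proof of Theorem~\ref{equivalence}; it defers to the arguments of \cite{CFY,ZCCY}. Those arguments follow the same high-level outline you describe --- show the PDE solution dominates the Green-function potential, then rule out a nontrivial $\alpha$-harmonic remainder --- but the \emph{order} in which things are established is essential, and your proposal inverts it in a way that produces a genuine gap.

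Specifically, in the direction (PDE) $\Rightarrow$ (IE) you propose to first define
\(v(x)=\int_{\Omega_r}G_\alpha(x,y)f(y,u(y))\,dy\)
and to justify its convergence from the asymptotics $G_\alpha(x,y)\lesssim|x-y|^{\alpha-n}$, the membership $u\in\mathcal{L}_\alpha(\mathbb{R}^n)$, and $(\mathbf{f_2})$. This does not work: $\mathcal{L}_\alpha$ controls only the weighted $L^1$ norm of $u$ itself, whereas $f(y,u(y))$ can grow superlinearly in $u$ (e.g.\ $f=|y|^a u^p$ with $p>1$), and $(\mathbf{f_2})$ is a local Lipschitz hypothesis, not a global size bound. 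There is no a priori reason for the integral to be finite. In the arguments the paper cites, finiteness is \emph{derived}, not assumed: one introduces the Green function $G_\alpha^R$ of the truncated domain $\Omega_r\cap B_R$, sets $v_R(x)=\int G_\alpha^R(x,y)f(y,u(y))\,dy$ (automatically finite because the domain is bounded and $u$ is continuous there), shows via the maximum principle for $(-\Delta)^{\alpha/2}$ that $u-v_R\geq0$ on $\Omega_r\cap B_R$, and then lets $R\to\infty$. Monotone convergence gives $v\leq u<\infty$ for free, and simultaneously delivers the sign information $w:=u-v\geq0$. Your proposal has neither the finiteness nor the sign, and the maximum-principle alternative you sketch for the final uniqueness step --- controlling boundary contributions on $\partial B_R$ by using $w\in\mathcal{L}_\alpha$ --- is circular, since $w\in\mathcal{L}_\alpha$ requires $v\in\mathcal{L}_\alpha$, which in turn rests on $0\leq v\leq u$, i.e.\ on precisely the truncated-domain comparison you omitted.

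A secondary issue is that the final Liouville step for $w$ is left as a sketch. Once one knows $w\geq0$, $w$ is $\alpha$-harmonic in $\Omega_r$, $w\equiv0$ on $\mathbb{R}^n\setminus\Omega_r$, and $w\in\mathcal{L}_\alpha$, one still needs a genuine uniqueness/Liouville statement for such functions on the exterior domain --- this is where the real work in \cite{CFY,ZCCY} lies (and where care is required, since for $\alpha=2$, $n\geq3$, functions such as $c\bigl(1-(r/|x|)^{n-2}\bigr)$ are nonnegative, harmonic on $\Omega_r$, and vanish on $S_r$, so some control at infinity is indispensable). You correctly identify this as the principal obstacle, but neither of your two suggested routes (Poisson-kernel estimates, Kelvin transform plus removable singularity) is carried to completion, and both would still need the $v\leq u$ comparison as input.

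In summary: your decomposition into the two implications is correct, and the (IE) $\Rightarrow$ (PDE) direction is essentially right modulo a regularity bootstrap, but the (PDE) $\Rightarrow$ (IE) direction as written is not a proof. Rebuild it around the truncated Green function $G_\alpha^R$ and the maximum principle on $\Omega_r\cap B_R$, derive $0\leq v\leq u$ and $w=u-v\geq0$, and only then invoke the appropriate Liouville theorem for nonnegative $\alpha$-harmonic functions on the exterior domain.
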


\begin{rem}\label{rem2}
Theorem \ref{equivalence} can be proved through entirely similar arguments as in \cite{CFY,ZCCY}, so we omit the details here.
\end{rem}

Next, we consider the integral equations \eqref{IEe} instead of PDEs \eqref{GPDE}. We will study the integral equations \eqref{IEe} via the method of scaling spheres developed by Dai and Qin in \cite{DQ0}. The method of scaling spheres is essentially a frozen variant of the method of moving spheres, that is, we only dilate or shrink the spheres with respect to one fixed center. The method of moving spheres was initially used by Padilla \cite{Pa}, Chen and Li \cite{CL1}, and developed by Li and Zhu \cite{LZ} (two key calculus lemmas were established therein), which means moving spheres centered at every points in $\mathbb{R}^{n}$ or $\partial\mathbb{R}^{n}_{+}$ in conjunction with calculus lemmas and ODE analysis. Later, it was further developed by Li \cite{Li}, Chen and Li \cite{CL0}, Jin, Li and Xu \cite{JLX}. Recently, Chen, Li and Zhang developed a direct method of moving spheres on fractional order equations in \cite{CLZ}. One should note that, being different from the method of moving spheres, the method of scaling spheres take full advantage of the integral representation formulae of solutions and can be applied to various PDE or IE problems with singularities or without translation invariance on general domains. It can also be applied to various fractional or higher order problems in the cases that the method of moving planes in conjunction with Kelvin transforms do not work (see \cite{DQ0,DQ2,DQZ}). The method of scaling spheres, in conjunction with the integral representation formulae of solutions and a ``Bootstrap" iteration process, will provide useful lower bound estimates on the asymptotic behaviour of solutions, which will lead to a contradiction with the integrability of solutions unless the solution $u\equiv0$.

Our Liouville type result for IEs \eqref{IEe} is the following theorem.
\begin{thm}\label{Thm0}
Assume $n\geq2$, $0<\alpha\leq2$, $f$ is subcritical and satisfies the assumptions $(\mathbf{f_{1}})$, $(\mathbf{f_{2}})$ and $(\mathbf{f_{3}})$. If $u\in C(\overline{\Omega_{r}})$ is a nonnegative solution to IEs \eqref{IEe}, then $u\equiv 0$ in $\overline{\Omega_{r}}$.
\end{thm}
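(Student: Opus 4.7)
The plan is to use the method of scaling spheres to produce a pointwise lower bound on $u$ at infinity, then bootstrap this bound through \eqref{IEe} using $(\mathbf{f_{3}})$ until the right-hand side becomes non-integrable, forcing $u\equiv 0$. Since $\Omega_{r}$ and $G_{\alpha}$ are rotationally symmetric about the origin, the spheres will be centred at $0$. The argument proceeds in three stages: (i) a Kelvin-type integral comparison yielding, for every $\lambda>r$, the pointwise inequality $u(x)\geq u_{\lambda}(x):=(\lambda/|x|)^{n-\alpha}u(x^{\lambda})$, where $x^{\lambda}:=\lambda^{2}x/|x|^{2}$; (ii) extraction of an initial polynomial lower bound $u(x)\gtrsim|x|^{-(n-\alpha)}$ at infinity by optimising over $\lambda$; and (iii) a bootstrap iteration against \eqref{IEe} that strictly improves this decay exponent step by step.

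For stage (i), I would use the explicit formula \eqref{GREENe-0} to verify a conformal identity of the form
\begin{equation*}
G_{\alpha}(x^{\lambda},y^{\lambda})=\Bigl(\frac{|x|\,|y|}{\lambda^{2}}\Bigr)^{n-\alpha}G_{\alpha}(x,y)
\end{equation*}
on the region where both $x$ and $y$ remain in $\Omega_{r}$ after inversion. This lets me rewrite \eqref{IEe} as an integral equation for $u_{\lambda}$ with the same Green's function but a Kelvin-transformed nonlinearity $f_{\lambda}$. Subtraction produces an integral identity for $w_{\lambda}:=u-u_{\lambda}$ whose kernel has a definite positive sign and whose nonlinear factor has the correct sign on any putative set $\{w_{\lambda}<0\}$, thanks to the subcriticality of $f$ in the sense of Definition \ref{defn1}. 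A contraction estimate on the negative part $(w_{\lambda})_{-}$ in a suitable weighted $L^{q}$ space, leveraging $(\mathbf{f_{1}})$ and $(\mathbf{f_{2}})$, then forces $w_{\lambda}\geq 0$: first for one starting value of $\lambda$ (the delicate technical point, since $\Omega_{r}$ is unbounded and the standard narrow-region trick is unavailable, so one must instead lean on the asymptotic decay of $u$ supplied by \eqref{IEe} itself), and then for every $\lambda>r$ via a continuity and contradiction argument on the supremum of admissible $\lambda$.

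For stage (ii), pick any $y_{0}\in\mathcal{C}\cap\Omega_{2\sigma r}$ with $u(y_{0})>0$; the alternative, that no such $y_{0}$ exists, forces $f(y,u(y))\equiv 0$ on the region where $(\mathbf{f_{3}})$ is active, which together with $(\mathbf{f_{1}})$ and \eqref{IEe} propagates to $u\equiv 0$ and finishes the argument. For $|x|$ large, setting $\lambda:=\sqrt{|x|\,|y_{0}|}$ places $x^{\lambda}$ at a positive scalar multiple of $y_{0}$, and the stage-(i) comparison gives $u(x)\geq c|x|^{-(n-\alpha)}$. For stage (iii), inserting the current lower bound $u(y)\geq c_{k}|y|^{-\mu_{k}}$ into \eqref{IEe} via $(\mathbf{f_{3}})$,
\begin{equation*}
u(x)\geq\overline{C}\int_{\mathcal{C}\cap\Omega_{2\sigma r}}G_{\alpha}(x,y)|y|^{\tau}u(y)^{p}\,dy\gtrsim\int_{\mathcal{C}\cap\Omega_{2\sigma r}}G_{\alpha}(x,y)|y|^{\tau-p\mu_{k}}\,dy,
\end{equation*}
and the Green's-function asymptotic analysis produces an improved exponent $\mu_{k+1}<\mu_{k}$. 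The subcritical condition $p<(n+\alpha+2\tau)/(n-\alpha)$ is exactly what guarantees strict improvement at every step; after finitely many iterations $\tau-p\mu_{k}$ crosses the non-integrability threshold for the right-hand integral above, which with \eqref{IEe} and $(\mathbf{f_{3}})$ forces $u(x)=+\infty$ at any fixed $x\in\Omega_{r}$ — a contradiction unless $u\equiv 0$.

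The main obstacles I anticipate are: verifying the conformal identity for $G_{\alpha}$ on the exterior domain $\Omega_{r}$ and the correct sign of the associated kernel difference (both are needed for the scaling spheres to have any traction); initiating the scaling-sphere comparison in the unbounded exterior, where the standard narrow-region trick is unavailable so one must rely on a weighted-norm contraction built from the decay of $u$ at infinity; and keeping track of possible logarithmic corrections at the borderline bootstrap step, a standard but annoying feature of this iteration.
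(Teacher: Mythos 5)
Your overall strategy matches the paper's (scaling spheres about the origin, initial polynomial lower bound, bootstrap via \eqref{IEe} and $(\mathbf{f_{3}})$, contradiction with integrability), but stage~(i) as written has two substantive gaps.

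First, the claimed conformal identity $G_{\alpha}(x^{\lambda},y^{\lambda})=\bigl(|x|\,|y|/\lambda^{2}\bigr)^{n-\alpha}G_{\alpha}(x,y)$ is \emph{false} for $\lambda\neq r$. The Riesz kernel $|x-y|^{-(n-\alpha)}$ does transform this way, but the extra factor $\int_{0}^{z}\!b^{\alpha/2-1}(1+b)^{-n/2}db$ in \eqref{GREENe-0} depends on $z=\frac{(|x|^{2}-r^{2})(|y|^{2}-r^{2})}{r^{2}|x-y|^{2}}$, which is invariant under inversion through $S_{r}$ but not under inversion through $S_{\lambda}$. Consequently \eqref{IEe} does \emph{not} recast as the same integral equation for $u_{\lambda}$, and the subtraction argument you describe breaks down. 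What the paper actually uses is a one-sided estimate (Lemma~\ref{G-e}(iv)):
\begin{equation*}
\Bigl(\tfrac{\lambda^{2}}{|x||y|}\Bigr)^{n-\alpha}G_{\alpha}(x^{\lambda},y^{\lambda})-\Bigl(\tfrac{\lambda}{|y|}\Bigr)^{n-\alpha}G_{\alpha}(x,y^{\lambda})\leq G_{\alpha}(x,y)-\Bigl(\tfrac{\lambda}{|x|}\Bigr)^{n-\alpha}G_{\alpha}(x^{\lambda},y),\quad\lambda<|x|,|y|<\lambda^{2},
\end{equation*}
together with (iii) of the same lemma. These inequalities, combined with the subcriticality and $(\mathbf{f_{1}})$, are exactly enough to produce the one-sided comparison $\omega^{\lambda}=u_{\lambda}-u\leq 0$; no identity is needed, and your derivation should be reframed around them.

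Second, the assertion that ``the standard narrow-region trick is unavailable'' because $\Omega_{r}$ is unbounded is incorrect and leads you to propose an unnecessary and unjustified detour. The Kelvin inversion through $S_{\lambda}$ maps $\{r<|y|<\lambda\}$ onto the bounded annulus $B_{\lambda^{2}}(0)\setminus\overline{B_{\lambda}(0)}$; the comparison between $u$ and $u_{\lambda}$ is nontrivial only there (outside $B_{\lambda^{2}}$ one has $u_{\lambda}=0\leq u$ for free). This annulus shrinks to $S_{r}$ as $\lambda\to r^{+}$, so the Hardy--Littlewood--Sobolev contraction in the small-measure/narrow region works exactly as in bounded domains: the paper makes $\bigl\|\,\cdot\,\bigr\|_{L^{n/\alpha}}$ of the difference quotient small by taking $\lambda$ near $r$, using $(\mathbf{f_{2}})$ and continuity of $u$ (see \eqref{3-14-e}--\eqref{3-16-e}). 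Your fallback ``lean on the asymptotic decay of $u$'' is left vague and is not needed.

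Two smaller points: choosing $\lambda=\sqrt{|x|\,|y_{0}|}$ gives a decay exponent $(n-\alpha)/2$, not $n-\alpha$ (this only affects the seed of the bootstrap, not its conclusion); and the critical case $\alpha=n=2$ requires its own treatment (logarithmic Green's function, no algebraic decay; the paper derives a positive lower bound $\inf_{|x|\geq\sigma}u>0$ and contradicts non-integrability of $|y|^{\tau}$ directly), which your proposal does not address.

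\end{document}
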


As a consequence of Theorem \ref{equivalence} and \ref{Thm0}, we obtain immediately the following Liouville type theorem on PDEs \eqref{GPDE}.
\begin{thm}\label{Thm1}
Assume $n\geq2$, $0<\alpha\leq2$, $f$ is subcritical and satisfies the assumptions $(\mathbf{f_{1}})$, $(\mathbf{f_{2}})$ and $(\mathbf{f_{3}})$. Suppose $u$ is a nonnegative solution of PDEs \eqref{GPDE}, then $u\equiv0$ in $\overline{\Omega_{r}}$.
\end{thm}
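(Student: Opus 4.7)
The plan is to obtain Theorem~\ref{Thm1} as an immediate consequence of the two preceding results, Theorem~\ref{equivalence} and Theorem~\ref{Thm0}. The strategy is to pass from the differential formulation to the integral one, and then invoke the Liouville theorem for the integral equation.

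Concretely, let $u$ be a nonnegative solution of \eqref{GPDE} in the regularity class specified in the introduction: $u \in C^{1,1}_{loc}(\Omega_{r})\cap C(\overline{\Omega_{r}})\cap \mathcal{L}_{\alpha}(\mathbb{R}^{n})$ when $0<\alpha<2$, and $u \in C^{2}(\Omega_{r})\cap C(\overline{\Omega_{r}})$ when $\alpha=2$, extended by $0$ outside $\Omega_{r}$. First, I would apply Theorem~\ref{equivalence} to conclude that the same $u$ satisfies the integral equation \eqref{IEe} with the exterior-domain Green's function $G_{\alpha}$ from \eqref{GREENe-0}. Next, the hypotheses of Theorem~\ref{Thm0} are matched one-for-one with those in the statement of Theorem~\ref{Thm1}: $n\geq 2$, $0<\alpha\leq 2$, subcriticality of $f$, and $(\mathbf{f_{1}})$, $(\mathbf{f_{2}})$, $(\mathbf{f_{3}})$; the required continuity $u\in C(\overline{\Omega_{r}})$ is built into the regularity class. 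Applying Theorem~\ref{Thm0} then yields $u\equiv 0$ in $\overline{\Omega_{r}}$, and combining this with the Dirichlet condition $u\equiv 0$ in $\mathbb{R}^{n}\setminus\Omega_{r}$ gives $u\equiv 0$ in $\mathbb{R}^{n}$.

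The only point that needs a line of verification is that the function space in which the PDE solution lives indeed supplies the continuity and decay required for Theorem~\ref{Thm0} to apply via the integral representation \eqref{IEe}; this is transparent from the definitions and the Green's function formula, so there is no genuine obstacle at this stage. The substantive difficulty of the whole program sits upstream, inside Theorem~\ref{Thm0}, where the method of scaling spheres (dilating spheres about a fixed center, combined with the integral representation through $G_{\alpha}$, a ``Bootstrap'' iteration that successively sharpens the lower-bound asymptotic behaviour of $u$, and a final contradiction with the integrability forced by the superlinear growth condition in $(\mathbf{f_{3}})$) does the real work. Theorem~\ref{Thm1} itself is purely a packaging step: the PDE-to-IE equivalence converts the Dirichlet boundary problem into a problem amenable to the integral machinery, and Theorem~\ref{Thm0} then closes the argument without any further moving-plane or moving-sphere manipulation needed at the PDE level.
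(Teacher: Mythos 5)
Your proposal is exactly the paper's argument: Theorem~\ref{Thm1} is stated there as an immediate corollary of Theorem~\ref{equivalence} (passing from the PDE to the integral equation \eqref{IEe}) followed by Theorem~\ref{Thm0} (the Liouville theorem for \eqref{IEe}). Nothing further is needed.
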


\begin{rem}\label{rem3}
For $\alpha=2$ and $n\geq3$, Reichel and Zou \cite{RZ} have obtained some Liouville type theorems for equations \eqref{GPDE} under some assumptions. Theorem \ref{Thm1} improved the results in \cite{RZ} at least in three aspects. First, Reichel and Zou \cite{RZ} required that there exist some $\tau>-2$ and $1<p<\frac{n+\alpha+2\tau}{n-\alpha}$ such that $f(x,u)$ satisfies the lower bound \eqref{e3} in assumption $(\mathbf{f_{3}})$ on the whole $\Omega_{r}\times\overline{\mathbb{R}_{+}}$. But we only need to assume in assumption $(\mathbf{f_{3}})$ that there exist $-\alpha<\tau<+\infty$, $0<p<\frac{n+\alpha+2\tau}{n-\alpha}$ if $0<\alpha<n$ ($-\alpha\leq\tau<+\infty$, $0<p<+\infty$ if $\alpha=n$), a cone $\mathcal{C}$ with vertex at $0$ and $\sigma>1$ such that $f(x,u)$ satisfies the lower bound \eqref{e3} in $(\mathcal{C}\cap\Omega_{2\sigma r})\times\overline{\mathbb{R}_{+}}$. This allows us to have much more admissible choices of the nonlinearities $f(x,u)$ (see Remark \ref{rem0} and \ref{rem1}). Second, Theorem \ref{Thm1} can also be applied to general fractional order cases $0<\alpha<2$ with $n\geq2$ and the critical order cases $\alpha=n=2$. Third, in assumption $(\mathbf{f_{2}})$, we only assume $(|x|-r)^{\theta}f(x,u)$ (not $f(x,u)$ itself) is locally Lipschitz on $u$, this allows $f(x,u)$ to have some singularities near the sphere $S_{r}:=\{x\in\mathbb{R}^{n}\,|\,|x|=r\}$.
\end{rem}

In particular, we consider the following Dirichlet problems for the H\'{e}non-Hardy type equations in exterior domains
\begin{equation}\label{HPDE}\\\begin{cases}
(-\Delta)^{\frac{\alpha}{2}}u(x)=|x|^{a}(|x|-r)^{b}u^{p}, \,\,\,\,\,\,\, u(x)\geq0, \,\,\,\,\,\,\,\, x\in\Omega_{r}, \\
u(x)\equiv0,\,\,\,\,\,\,\,\, x\in\mathbb{R}^{n}\setminus\Omega_{r},
\end{cases}\end{equation}
where $n\geq2$, $0<\alpha\leq2$.

As a consequence of Theorem \ref{Thm1} and Remark \ref{rem0}, we deduce the following corollary.
\begin{cor}\label{Cor1}
Assume $0\leq b<+\infty$, $-b-\alpha<a<+\infty$ and $1\leq p<\frac{n+\alpha+2(a+b)}{n-\alpha}$ if $0<\alpha<n$, $-b-\alpha\leq a<+\infty$ and $1\leq p<+\infty$ if $\alpha=n=2$. Suppose $u$ is a nonnegative solution of \eqref{HPDE}, then $u\equiv0$ in $\overline{\Omega_{r}}$.
\end{cor}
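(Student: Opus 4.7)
The plan is to apply Theorem \ref{Thm1} directly to the explicit nonlinearity $f(x,u)=|x|^{a}(|x|-r)^{b}u^{p}$, so the proof reduces to checking that this $f$ satisfies the four hypotheses of that theorem: $(\mathbf{f_{1}})$, $(\mathbf{f_{2}})$, $(\mathbf{f_{3}})$, and subcritical growth in the sense of Definition \ref{defn1}. This is exactly the content of Remark \ref{rem0}, so the task amounts to recording these four verifications in a compact way.

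Assumption $(\mathbf{f_{1}})$ is immediate because $p\geq 1$ makes $u\mapsto u^{p}$ nondecreasing on $\overline{\mathbb{R}_{+}}$ and $|x|^{a}(|x|-r)^{b}\geq 0$ on $\Omega_{r}$ (using $b\geq 0$ and $|x|>r$). For $(\mathbf{f_{2}})$ I would take $\theta=0<\alpha/n$; on any bounded $\omega\subset\Omega_{r}$ both $|x|^{a}$ and $(|x|-r)^{b}$ are bounded (since $b\geq 0$ and $|x|>r>0$), while $u\mapsto u^{p}$ is locally Lipschitz on $\overline{\mathbb{R}_{+}}$ because $p\geq 1$. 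For $(\mathbf{f_{3}})$ I choose the cone $\mathcal{C}=\mathbb{R}^{n}$ and any $\sigma>1$: if $|x|>2\sigma r$ then $|x|-r\geq\bigl(1-\tfrac{1}{2\sigma}\bigr)|x|$, so
\[
f(x,u)\geq\bigl(1-\tfrac{1}{2\sigma}\bigr)^{b}|x|^{a+b}u^{p},
\]
and setting $\tau:=a+b$ and $\overline{C}:=(1-\tfrac{1}{2\sigma})^{b}$ matches $(\mathbf{f_{3}})$. The hypotheses on $a,b,p$ translate exactly into $-\alpha<\tau$ (or $-\alpha\leq\tau$ in the critical case) and $p$ in the admissible range.

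The only step that requires actual computation is subcriticality. In the non-critical case $0<\alpha<n$, writing $\gamma:=2/(n-\alpha)$ and $\delta:=(n+\alpha+2a)/(n-\alpha)-p$, I compute
\[
\mu^{\frac{n+\alpha}{n-\alpha}}f(\mu^{\gamma}x,\mu^{-1}u)=\mu^{\delta}|x|^{a}(\mu^{\gamma}|x|-r)^{b}u^{p},
\]
and factor its $\mu$-derivative as
\[
\mu^{\delta-1}(\mu^{\gamma}|x|-r)^{b-1}\bigl[(\delta+b\gamma)\mu^{\gamma}|x|-\delta r\bigr]|x|^{a}u^{p}.
\]
Since $\delta+b\gamma=(n+\alpha+2(a+b))/(n-\alpha)-p>0$ by the subcritical bound on $p$, the bracket is strictly positive for all $\mu\geq 1$ and $|x|>r$: when $b>0$ one has $(\delta+b\gamma)\mu^{\gamma}|x|\geq(\delta+b\gamma)r>\delta r$, while when $b=0$ the bracket collapses to $\delta(\mu^{\gamma}|x|-r)$ with $\delta>0$. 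Hence the rescaled expression is strictly increasing on $[1,\infty)$, and the critical-order case $\alpha=n=2$ is handled in the same way starting from $\mu^{n}f(\mu x,u)=\mu^{n+a}|x|^{a}(\mu|x|-r)^{b}u^{p}$. Invoking Theorem \ref{Thm1} then yields $u\equiv 0$ on $\overline{\Omega_{r}}$. The only delicate point is keeping the subcriticality check honest when $\delta<0$, where $\mu^{\delta}$ and $(\mu^{\gamma}|x|-r)^{b}$ pull in opposite directions; the factorisation above is precisely what settles that sign question.
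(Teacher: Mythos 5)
Your proof takes the same route as the paper, which deduces the corollary from Theorem \ref{Thm1} and Remark \ref{rem0}; you simply supply the verifications that Remark \ref{rem0} asserts without proof. The checks of $(\mathbf{f_1})$ and $(\mathbf{f_2})$ (with $\theta=0$, which is legitimate because $b\geq 0$ makes $(|x|-r)^b$ and $|x|^a$ bounded on bounded $\omega\subset\Omega_r$, while $u\mapsto u^p$ is locally Lipschitz on $\overline{\mathbb{R}_+}$ when $p\geq 1$) and of $(\mathbf{f_3})$ (with $\mathcal{C}=\mathbb{R}^n$, $\tau=a+b$) are clean. The subcriticality computation is the only place where something could go wrong, and your factorisation of the $\mu$-derivative is exactly the right device: since $\delta+b\gamma=\frac{n+\alpha+2(a+b)}{n-\alpha}-p>0$ by hypothesis, the bracket $(\delta+b\gamma)\mu^{\gamma}|x|-\delta r$ is positive regardless of the sign of $\delta$, which is the delicate point you correctly flag. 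In fact your derivative is positive for \emph{every} $\mu>0$ with $\mu^{\gamma}|x|>r$, not just $\mu\geq 1$, so both alternatives in Definition \ref{defn1} hold; this matters because the paper's proof of Theorem \ref{Thm0} is written for the ``$\mu\leq 1$'' alternative.

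One genuine boundary case slips through your critical-order check (and, to be fair, through the paper's Remark \ref{rem0} as well). When $\alpha=n$, $b=0$ and $a=-\alpha=-n$ — which the corollary's hypothesis $-b-\alpha\leq a$ permits — the quantity $\mu^{n}f(\mu x,u)=\mu^{n+a}|x|^{a}u^{p}=|x|^{-n}u^{p}$ is \emph{independent} of $\mu$, so it is not strictly increasing and $f$ fails to be subcritical in the sense of Definition \ref{defn1}; Theorem \ref{Thm1} is then not literally applicable. Your critical-case bracket $(n+a+b)\mu|x|-(n+a)r$ vanishes identically precisely when $n+a=b=0$, so ``handled in the same way'' breaks at that single corner. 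Everywhere else in the stated range the bracket is strictly positive (equal to $br>0$ when $n+a+b=0$ but $b>0$, or bounded below by $-(n+a)r>0$ when $n+a<0$, or by the $b=0$ reduction $\delta(\mu|x|-r)>0$ when $n+a>0$), and the argument is sound.
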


\begin{rem}\label{rem1}
By Remark \ref{rem0}, if we suppose the nonlinearities in \eqref{GPDE} take the form:
\begin{equation}\label{0-3}
  f(x,u)=|x_{i}|^{a}(|x|-r)^{b}u^{p}, \quad |x|^{a}(|x_{i}|-r)^{b}u^{p} \quad \text{or} \quad |x_{i}|^{a}(|x_{i}|-r)^{b}u^{p}
\end{equation}
for any $i=1,\cdots,n$, then under the same assumptions as in Corollary \ref{Cor1}, the Liouville type results in Theorem \ref{Thm1} are also valid for equations \eqref{GPDE}.
\end{rem}

We also consider the following higher order integral equations
\begin{equation}\label{HIEe}
  u(x)=\int_{\Omega_{r}}G_{\alpha}(x,y)f(y,u(y))dy,
\end{equation}
where for $\alpha=2m$ with $1\leq m<\frac{n}{2}$ and $n\geq3$,
\begin{equation}\label{GREENe-H1}
  G_{\alpha}(x,y):=C_{n,\alpha}\Bigg(\frac{1}{|x-y|^{n-\alpha}}-\frac{1}{\left|\frac{rx}{|x|}-\frac{|x|y}{r}\right|^{n-\alpha}}\Bigg) \qquad \text{if} \,\,\, x,y\in \Omega_{r},
\end{equation}
while for $\alpha=n$ with $n\geq2$ even,
\begin{equation}\label{GREENe-H2}
  G_{\alpha}(x,y):=C_{n,\alpha}\Bigg(\ln\frac{1}{|x-y|}-\ln\frac{1}{\left|\frac{rx}{|x|}-\frac{|x|y}{r}\right|}\Bigg) \qquad \text{if} \,\,\, x,y\in \Omega_{r},
\end{equation}
and $G_{\alpha}(x,y):=0$ if $x$ or $y\in\mathbb{R}^{n}\setminus \Omega_{r}$ is the Green's function in exterior domain $\Omega_{r}$ for $(-\Delta)^{\frac{\alpha}{2}}$ with Navier boundary conditions when $\alpha=2m$ with $1\leq m\leq\frac{n}{2}$ and $n\geq2$. The integral equations \eqref{HIEe} is closely related to higher order Navier problems in $\Omega_{r}$.

By entirely similar arguments as in the proof of Theorem \ref{Thm0}, we can prove the following Liouville theorem for integral equations \eqref{HIEe}.
\begin{thm}\label{Thm0-H}
Assume $n\geq2$, $\alpha=2m$ with $1\leq m\leq\frac{n}{2}$, $f$ is subcritical and satisfies the assumptions $(\mathbf{f_{1}})$, $(\mathbf{f_{2}})$ and $(\mathbf{f_{3}})$. If $u\in C(\overline{\Omega_{r}})$ is a nonnegative solution to IEs \eqref{HIEe}, then $u\equiv 0$ in $\overline{\Omega_{r}}$.
\end{thm}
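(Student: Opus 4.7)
The plan is to adapt the method of scaling spheres from the proof of Theorem \ref{Thm0} to the higher order Navier Green's functions \eqref{GREENe-H1} and \eqref{GREENe-H2}. The argument relies on three structural properties of $G_\alpha$: nonnegativity, the reflection positivity
\begin{equation*}
G_\alpha(x^\lambda,y) - G_\alpha(x,y) > 0 \qquad \text{whenever } r < |x| < \lambda < |y|,
\end{equation*}
and the Kelvin identity $G_\alpha(x^\lambda,y^\lambda) = (\lambda/|x|)^{\alpha-n}(\lambda/|y|)^{\alpha-n} G_\alpha(x,y)$ (with logarithmic analogue for $\alpha=n=2$), where $x^\lambda := \lambda^2 x/|x|^2$ and $\lambda > r$. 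All three follow by direct algebraic manipulation of the explicit formulas: noting that $\left|\tfrac{rx}{|x|} - \tfrac{|x|y}{r}\right| = \tfrac{r}{|y|}|x - y^{\ast}|$ with $y^{\ast} = r^2 y/|y|^2$, the functions \eqref{GREENe-H1} and \eqref{GREENe-H2} are essentially differences of fundamental solutions under Kelvin reflection across $S_r$, mirroring the integral Green's function \eqref{GREENe-0}.

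For each $\lambda > r$, introduce the Kelvin transform $u_\lambda(x) := (\lambda/|x|)^{n-\alpha} u(x^\lambda)$ (with the logarithmic modification when $\alpha=n=2$) and set $w_\lambda := u_\lambda - u$ on the annulus $A_\lambda := B_\lambda \setminus \overline{B_r}$. Combine the integral representation \eqref{HIEe}, the Green's function identities, the monotonicity $(\mathbf{f_1})$, and the strict subcritical monotonicity of $\mu^{(n+\alpha)/(n-\alpha)} f(\mu^{2/(n-\alpha)}x, \mu^{-1}u)$ from Definition \ref{defn1} to derive an integral inequality for the negative part $w_\lambda^-$ on $A_\lambda$. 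A Hardy-Littlewood-Sobolev type estimate, using the decay of $u$ built into \eqref{HIEe} and the locally Lipschitz regularity of $(|x|-r)^\theta f(x,u)$ from $(\mathbf{f_2})$ (which accommodates possible singularities of $f$ at $S_r$), shows that $w_\lambda \geq 0$ on $A_\lambda$ for all $\lambda$ sufficiently large. The standard continuation argument then forces the critical radius $\lambda_0 := \inf\{\lambda > r : w_\mu \geq 0 \text{ on } A_\mu \text{ for all } \mu \geq \lambda\}$ to equal $r$, producing a first asymptotic lower bound $u(x) \geq C_0 |x|^{-\mu_0}$ as $|x| \to \infty$ for some $\mu_0 > 0$.

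The second half is the bootstrap iteration. Restrict \eqref{HIEe} to the cone $\mathcal{C} \cap \Omega_{2\sigma r}$ supplied by $(\mathbf{f_3})$, where $f(y,u) \geq \overline{C}|y|^\tau u^p$, plug in the lower bound $u(y) \geq C_k |y|^{-\mu_k}$, and exploit the asymptotics $G_\alpha(x,y) \asymp |x-y|^{\alpha-n}$ away from $S_r$ to deduce an improved lower bound $u(x) \geq C_{k+1} |x|^{-\mu_{k+1}}$ with recursion $\mu_{k+1} = p\mu_k - \alpha - \tau$ (with extra logarithmic factors in the critical order case). The subcriticality $p < (n+\alpha+2\tau)/(n-\alpha)$ is precisely equivalent to $\mu_0 < (\alpha+\tau)/(p-1)$ for the initial $\mu_0$ produced by the scaling step, and since $p > 1$ makes this unstable fixed point repelling, $\mu_k \to -\infty$. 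In finitely many iterations the resulting polynomial growth of $u$ contradicts the finiteness of $\int_{\Omega_r} G_\alpha(x_0,y) f(y,u(y))\,dy = u(x_0) < \infty$ at any fixed $x_0 \in \Omega_r$, unless $u \equiv 0$.

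The principal technical obstacle is to verify the reflection positivity and Kelvin identity uniformly up to $S_r$, where $G_\alpha$ vanishes, and to push the bootstrap through the critical order case $\alpha=n=2$, where the logarithmic nature of \eqref{GREENe-H2} forces one to track successive lower bounds of the form $u(x) \geq C_k (\log|x|)^{a_k}$ with $a_k$ growing, checking at each step that the resulting nonlinear integral still diverges at the appropriate rate; the extra slack afforded by the unrestricted range $p < +\infty$ and $\tau \geq -\alpha$ allowed when $\alpha = n$ in $(\mathbf{f_3})$ is what ultimately closes the argument.
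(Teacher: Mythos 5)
Your overall strategy---scaling spheres followed by a bootstrap---is the route the paper intends (Remark \ref{rem5} points to the proof of Theorem \ref{Thm0}), but the load-bearing structural property you invoke is false. The claimed Kelvin identity $G_\alpha(x^\lambda,y^\lambda)=(\lambda/|x|)^{\alpha-n}(\lambda/|y|)^{\alpha-n}G_\alpha(x,y)$ holds only when $\lambda=r$. The image term in \eqref{GREENe-H1} involves the reflection $y^*:=r^2y/|y|^2$ across $S_r$, and this does not commute with the Kelvin map $y\mapsto y^\lambda=\lambda^2y/|y|^2$ across $S_\lambda$ unless $\lambda=r$: one computes $(y^\lambda)^*=r^2y/\lambda^2$ while $(y^*)^\lambda=\lambda^2y/r^2$, and directly $\left|\tfrac{rx}{|x|}-\tfrac{|x|y}{r}\right|^2=r^2-2x\cdot y+|x|^2|y|^2/r^2$, which after replacing $(x,y)$ by $(x^\lambda,y^\lambda)$ does not reproduce the factor $\big(\tfrac{\lambda^2}{|x||y|}\big)^2$ times the original unless $\lambda=r$. (The same failure occurs for the Dirichlet kernel \eqref{GREENe-0}, since $\tfrac{(|x|^2-r^2)(|y|^2-r^2)}{r^2|x-y|^2}$ is not $S_\lambda$-Kelvin invariant.) The paper circumvents this precisely because what it actually needs is the one-sided inequality of Lemma \ref{G-e}(iv),
\[
\left(\frac{\lambda^2}{|x||y|}\right)^{n-\alpha}G_\alpha(x^\lambda,y^\lambda)-\left(\frac{\lambda}{|y|}\right)^{n-\alpha}G_\alpha(x,y^\lambda)\leq G_\alpha(x,y)-\left(\frac{\lambda}{|x|}\right)^{n-\alpha}G_\alpha(x^\lambda,y),
\qquad \lambda<|x|,|y|<\lambda^2,
\]
and it is the verification of this inequality (together with (i)--(iii)) for the Navier kernels \eqref{GREENe-H1}--\eqref{GREENe-H2} that constitutes the genuine new work hidden behind ``entirely similar arguments.'' Without it, the decomposition \eqref{omega-e} cannot be collapsed to the single-sign integral \eqref{2-37-e}, and the narrow-region/HLS step never starts.

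Two secondary errors in the same direction: your ``reflection positivity'' $G_\alpha(x^\lambda,y)-G_\alpha(x,y)>0$ for $r<|x|<\lambda<|y|$ is false already for the free kernel $|x-y|^{\alpha-n}$ (take $x,y$ nearly antipodal, so $|x^\lambda-y|>|x-y|$); the correct statement carries the Kelvin weight, as in Lemma \ref{G-e}(iii). And the continuation is run backwards: the smallness that launches the scaling spheres comes from the narrow-region estimate, which is available only when the annulus is thin, i.e.\ for $\lambda$ close to $r$, after which one dilates outward to $\lambda=+\infty$. Starting from $\lambda$ large, the annulus $B_\lambda\setminus\overline{B_r}$ is not small, and ``the decay of $u$'' gives no control near $S_\lambda$, where you would need to compare $u(x)$ with the amplified quantity $(\lambda/|x|)^{n-\alpha}u(\lambda^2x/|x|^2)$ without any a priori lower bound on $u$ at large radius.
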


\begin{rem}\label{rem5}
Theorem \ref{Thm0-H} can be proved through entirely similar arguments as Theorem \ref{Thm0}, so we omit the details here.
\end{rem}

\begin{rem}\label{rem4}
Consider the following Navier problems for higher order elliptic equations in $\Omega_{r}$:
\begin{equation}\label{NPDE-H}\\\begin{cases}
(-\Delta)^{\frac{\alpha}{2}} u(x)=f(x,u(x)), \,\,\,\,\,\,\,\, u(x)\geq0, \,\,\,\,\, \,\,\, x\in\Omega_{r}, \\
u=(-\Delta)u=\cdots=(-\Delta)^{\frac{\alpha}{2}-1}u=0 \,\,\,\,\,\,\,\,\,\,\, \text{on} \,\,\,  S_{r},
\end{cases}\end{equation}
where $u\in C^{\alpha}(\Omega_{r})\cap C^{\alpha-2}(\overline{\Omega_{r}})$, $n\geq2$, $\alpha=2m$ with $1\leq m\leq\frac{n}{2}$, $f$ is subcritical and satisfies the assumptions $(\mathbf{f_{1}})$, $(\mathbf{f_{2}})$ and $(\mathbf{f_{3}})$. Once the equivalence between the Navier problems \eqref{NPDE-H} and the integral equations \eqref{HIEe} has been established, we can also derive from Theorem \ref{Thm0-H} that Liouville theorem for nonnegative solutions to the Navier problems \eqref{NPDE-H} holds.
\end{rem}

In the following, we will use $C$ to denote a general positive constant that may depend on $n$, $\alpha$, $\tau$, $\sigma$, $\theta$, $p$, $\overline{C}$, $u$ and the cone $\mathcal{C}$, and whose value may differ from line to line.

\section{Proof of Theorem \ref{Thm0}}

In this section, we will prove Theorem \ref{Thm0} via contradiction arguments and the method of scaling spheres. Without loss of generality, we may assume the radius $r=1$. We may also assume that the nonlinear term $f(x,u)$ satisfies subcritical conditions in Definition \ref{defn1} for $\mu\leq1$. If $f(x,u)$ satisfies subcritical conditions in Definition \ref{defn1} for $\mu\geq1$, we only need to carry out calculations and estimates inside the ball $B_{\lambda}(0)$ during the scaling spheres procedure.

Now suppose on the contrary that $u\geq0$ satisfies the equivalent integral equations \eqref{IEe} but $u$ is not identically zero, then one can infer from the integral equations \eqref{IEe} that $u$ is actually a positive solution, i.e., $u>0$ in $\Omega_{1}$. Next, we will carry out our proof by discussing the non-critical order cases and the critical order case separately.

\subsection{The non-critical order cases $0<\alpha<n$}

We will apply the method of scaling spheres to show the following lower bound estimates for asymptotic behaviour of positive solution $u$ as $|x|\rightarrow+\infty$, which will contradict with the integral equations \eqref{IEe}.
\begin{thm}\label{lower-e}
Assume $n\geq2$, $n>\alpha$, $0<\alpha\leq2$, $f(x,u)$ is subcritical and satisfies assumptions $(\mathbf{f_{1}})$, $(\mathbf{f_{2}})$ and $(\mathbf{f_{3}})$. Suppose $u$ is a positive solution to integral equations \eqref{IEe}, then it satisfies the following lower bound estimates: for all $|x|\geq2\sigma$,
\begin{equation}\label{lb1-e}
  u(x)\geq C_{\kappa}|x|^{\kappa} \quad\quad \forall \, \kappa<\frac{\alpha+\tau}{1-p}, \quad\quad \text{if} \,\,\,\, 0<p<1;
\end{equation}
\begin{equation}\label{lb2-e}
  u(x)\geq C_{\kappa}|x|^{\kappa} \quad\quad \forall \, \kappa<+\infty, \quad\quad \text{if} \,\,\,\, 1\leq p<\frac{n+\alpha+2\tau}{n-\alpha}.
\end{equation}
\end{thm}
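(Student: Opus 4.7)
The plan is to prove Theorem \ref{lower-e} by running the method of scaling spheres with center $0$ to extract an initial polynomial lower bound on $u$, and then upgrading this bound through a bootstrap iteration on the integral equation \eqref{IEe}. For $\lambda > 1$, I would introduce the Kelvin transform
$$u_\lambda(x) := \left(\frac{\lambda}{|x|}\right)^{n-\alpha} u\!\left(\frac{\lambda^2 x}{|x|^2}\right)$$
and the defect $w_\lambda := u_\lambda - u$. Since we work in the subcriticality range $\mu \leq 1$, the comparison is performed on the outer region $D_\lambda := \{x \in \Omega_1 : |x| \geq \lambda\}$. Using the explicit form of $G_\alpha$ in \eqref{GREENe-0} together with a change of variables, one derives the Kelvin identity for $u_\lambda$; combined with the monotonicity $(\mathbf{f_1})$ and the subcriticality condition in Definition \ref{defn1}, this yields an integral inequality on $w_\lambda^+$ whose kernel is small for $\lambda$ close to $1$. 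A narrow-region estimate starts the scaling at some $\lambda$ slightly above $1$, and a standard contradiction/continuity argument then propagates $w_\lambda \leq 0$ on $D_\lambda$ to all $\lambda > 1$.

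Granted $u(x) \geq u_\lambda(x)$ for every $|x| \geq \lambda > 1$, set $c_0 := \min\{u(y) : |y| = 2\sigma\}$, which is positive because $u$ is a positive continuous function on the compact set $\{|y| = 2\sigma\}$. For any $x$ with $|x| \geq 2\sigma$, the choice $\lambda := \sqrt{2\sigma\, |x|}$ satisfies $\lambda \geq 2\sigma > 1$ and $|x^\lambda| = 2\sigma$, hence
$$u(x) \;\geq\; \left(\frac{\lambda}{|x|}\right)^{n-\alpha} u(x^\lambda) \;\geq\; c_0\,(2\sigma)^{\frac{n-\alpha}{2}}\,|x|^{-\frac{n-\alpha}{2}}, \qquad |x| \geq 2\sigma,$$
which is the base estimate $u(x) \geq C_0\, |x|^{\kappa_0}$ with $\kappa_0 := -(n-\alpha)/2$.

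The base estimate is then iterated through \eqref{IEe}. Assume inductively that $u(y) \geq c_k\,|y|^{\kappa_k}$ for $|y| \geq 2\sigma$. Using $(\mathbf{f_3})$ on $\mathcal{C} \cap \Omega_{2\sigma}$ and the pointwise lower bound $G_\alpha(x,y) \geq C\, |x-y|^{\alpha-n}$, valid for $x, y$ with $|x|, |y| \geq 2\sigma$ and $|x-y| \sim |x|$ (readable from \eqref{GREENe-0} since the inner integral there is bounded below by a positive constant), one integrates over the annular cone sector $A_x := \{y \in \mathcal{C} : |x|/4 \leq |y| \leq |x|/2\}$ to obtain
$$u(x) \;\geq\; C \int_{A_x} |x-y|^{\alpha-n}\, |y|^{\tau + p\kappa_k}\, dy \;\geq\; c_{k+1}\, |x|^{\,\kappa_{k+1}}, \qquad \kappa_{k+1} := p\,\kappa_k + \alpha + \tau,$$
valid for every $|x| \geq 4\sigma$. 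For $0 < p < 1$ the affine recursion converges monotonically to the fixed point $\kappa^* := (\alpha+\tau)/(1-p) > 0$ (which is positive because $\tau > -\alpha$), so every $\kappa < \kappa^*$ is exceeded by some $\kappa_k$, giving \eqref{lb1-e}. For $1 \leq p < \tfrac{n+\alpha+2\tau}{n-\alpha}$ the increment $\kappa_{k+1} - \kappa_k = (p-1)\kappa_k + \alpha + \tau$ is eventually bounded below by $\alpha + \tau > 0$, so $\kappa_k \to +\infty$, giving \eqref{lb2-e}.

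The main obstacle is Step 1: executing the scaling spheres in the exterior domain when $f(\cdot,u)$ may be singular near the inner sphere $S_1$. This is exactly the reason for the weighted Lipschitz hypothesis $(\mathbf{f_2})$: the weight $(|y|-1)^\theta$ with $\theta < \alpha/n$, combined with the sharp boundary behavior of $G_\alpha(x,y) - G_\alpha^\lambda(x,y)$ near $S_1$, is needed to absorb the potentially singular contribution of the integrand in both the narrow-region start-up and the contradiction step that keeps the scaling going up to $\lambda = +\infty$. A secondary technical point is that the bootstrap integrates only on $\mathcal{C}$, so constants depend on the solid angle of $\mathcal{C}$; however, since $\mathcal{C}$ is fixed and has positive measure and the Kelvin transform preserves cones with vertex at $0$, this dependence is harmless. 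Once these points are settled, the remainder of the scheme adapts the machinery of \cite{DQ0} with only minor bookkeeping changes.
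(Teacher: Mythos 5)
Your proposal follows the same scaling--spheres--plus--bootstrap template as the paper, but the bootstrap step as written has a domain-of-validity issue that needs to be repaired. You integrate over the \emph{inner} annular sector $A_x = \{y\in\mathcal{C}: |x|/4 \leq |y| \leq |x|/2\}$, so the requirements $|y| > 2\sigma$ (for $(\mathbf{f_3})$) and $|y| \geq 2\sigma$ (for the inductive lower bound) already force $|x| \geq 8\sigma$, not the $|x| \geq 4\sigma$ you wrote. Worse, the improved estimate $u(x)\ge c_{k+1}|x|^{\kappa_{k+1}}$ then only holds on $\{|x|\ge 8\sigma\}$; at the next iteration you need the previous bound on $\{|y|\ge 8\sigma\}$ and the integration again demands $|x|/4 \geq 8\sigma$, so the domain of validity keeps shrinking and after $k$ steps you only control $\{|x|\geq 2\cdot 4^k\sigma\}$, never a fixed set $\{|x|\ge 2\sigma\}$. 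The paper avoids this by integrating over the \emph{outer} shell $\{2|x| \leq |y| \leq 4|x|\}$; then $|x| \geq 2\sigma$ automatically gives $|y| \geq 4\sigma > 2\sigma$, and the bootstrap is self-consistent on $\{|x|\geq 2\sigma\}$ with no extra argument. Your version can be rescued (since only finitely many iterations are used for any fixed $\kappa$, one can re-extend each intermediate bound down to $|x|=2\sigma$ by positivity and continuity of $u$ on the compact annulus in between, at the cost of adjusting the constant), but the outer-shell choice is cleaner and you should either adopt it or spell out that patch.

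Apart from this, the argument is essentially the paper's. Your region $D_\lambda = \{|x|\geq\lambda\}$ is equivalent to the paper's annulus $B_{\lambda^2}\setminus\overline{B_\lambda}$, because for $|x|>\lambda^2$ one has $x^\lambda\in\overline{B_1}$ and hence $u_\lambda(x)=0\le u(x)$ trivially; the base estimate via $\lambda=\sqrt{2\sigma|x|}$ and the recursion $\kappa_{k+1}=p\kappa_k+\alpha+\tau$ match the paper's $\mu$-recursion up to sign; and your analysis of the recursion's limit in the two regimes of $p$ is correct. You also correctly identify that $(\mathbf{f_2})$ with $\theta<\alpha/n$ is precisely what makes the quotient $(f(y,u_\lambda)-f(y,u))/(u_\lambda-u)$ lie in $L^{n/\alpha}$ near $S_1$, which is what the paper's Hardy--Littlewood--Sobolev narrow-region argument requires --- though to be precise, the mechanism there is the upper bound $G_\alpha(x,y)\leq C|x-y|^{\alpha-n}$ combined with the small $L^{n/\alpha}$-mass of the narrow region, rather than any refined boundary asymptotics of the Green's-function difference.
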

\begin{proof}
Given any $\lambda>1$, we define the Kelvin transform of $u$ centered at $0$ by
\begin{equation}\label{Kelvin-e}
  u_{\lambda}(x):=\left(\frac{\lambda}{|x|}\right)^{n-\alpha}u\left(\frac{\lambda^{2}x}{|x|^{2}}\right)
\end{equation}
for arbitrary $x\in\{x\in\overline{\Omega_{1}}\,|\,1\leq|x|\leq\lambda^{2}\}$, and define the reflection of $x$ about the sphere $S_{\lambda}:=\{x\in\mathbb{R}^{n}\,|\,|x|=\lambda\}$ by $x^{\lambda}:=\frac{\lambda^{2}x}{|x|^{2}}$.

Now, we will carry out the process of scaling spheres in $\Omega_{1}$ with respect to the origin $0\in\mathbb{R}^{n}$.

Let $\lambda>1$ be an arbitrary real number and let $\omega^{\lambda}(x):=u_{\lambda}(x)-u(x)$ for any $x\in B_{\lambda^{2}}(0)\setminus\overline{B_{\lambda}(0)}$. We will first show that, for $\lambda>1$ sufficiently close to $1$,
\begin{equation}\label{2-7-e}
  \omega^{\lambda}(x)\leq0, \,\,\,\,\,\, \forall \,\, x\in B_{\lambda^{2}}(0)\setminus\overline{B_{\lambda}(0)}.
\end{equation}
Then, we start dilating the sphere $S_{\lambda}$ from near the unit sphere $S_{1}$ outward as long as \eqref{2-7-e} holds, until its limiting position $\lambda=+\infty$ and derive lower bound estimates on asymptotic behaviour of $u$ as $|x|\rightarrow+\infty$. Therefore, the scaling sphere process can be divided into two steps.

\emph{Step 1. Start dilating the sphere $S_{\lambda}$ from near $\lambda=1$.} Define
\begin{equation}\label{2-8-e}
  (B_{\lambda^{2}}\setminus\overline{B_{\lambda}})^{+}:=\{x\in B_{\lambda^{2}}(0)\setminus\overline{B_{\lambda}(0)} \, | \, \omega^{\lambda}(x)>0\}.
\end{equation}
We will show that, for $\lambda>1$ sufficiently close to $1$,
\begin{equation}\label{2-9-e}
  (B_{\lambda^{2}}\setminus\overline{B_{\lambda}})^{+}=\emptyset.
\end{equation}

Since $u$ is a positive solution to the integral equations \eqref{IEe}, through direct calculations, we get, for any $\lambda>1$,
\begin{equation}\label{2-34-e}
  u(x)=\int_{|y|>\lambda}G_{\alpha}(x,y)f(y,u(y))dy+\int_{B_{\lambda^{2}}(0)\setminus\overline{B_{\lambda}(0)}}G_{\alpha}(x,y^{\lambda})
  \left(\frac{\lambda}{|y|}\right)^{2n}f(y^{\lambda},u(y^{\lambda}))dy
\end{equation}
for any $x\in\overline{\Omega_{1}}$. By direct calculations, one can also verify that $u_{\lambda}$ satisfies the following integral equation
\begin{equation}\label{2-35-e}
  u_{\lambda}(x)=\int_{|y|>1}G_{\alpha}(x^{\lambda},y)\left(\frac{\lambda}{|x|}\right)^{n-\alpha}f(y,u(y))dy
\end{equation}
for any $x\in\{x\in\overline{\Omega_{1}}\,|\,1\leq|x|\leq\lambda^{2}\}$, and hence, it follows immediately that
\begin{eqnarray}\label{2-36-e}
  u_{\lambda}(x)&=&\int_{|y|>\lambda}G_{\alpha}(x^{\lambda},y)\left(\frac{\lambda}{|x|}\right)^{n-\alpha}f(y,u(y))dy \\
 \nonumber \quad\quad &&+\int_{B_{\lambda^{2}}(0)\setminus\overline{B_{\lambda}(0)}}G_{\alpha}(x^{\lambda},y^{\lambda})\left(\frac{\lambda}{|x|}\right)^{n-\alpha}
 \left(\frac{\lambda}{|y|}\right)^{2n}f(y^{\lambda},u(y^{\lambda}))dy.
\end{eqnarray}
Therefore, we have, for any $x\in B_{\lambda^{2}}(0)\setminus\overline{B_{1}(0)}$,
\begin{eqnarray}\label{omega-e}
  && \omega^{\lambda}(x)=u_{\lambda}(x)-u(x) \\
  \nonumber &=& \int_{B_{\lambda^{2}}(0)\setminus\overline{B_{\lambda}(0)}}\Bigg\{\left[\left(\frac{\lambda}{|x|}\right)^{n-\alpha}G_{\alpha}(x^{\lambda},y^{\lambda})
  -G_{\alpha}(x,y^{\lambda})\right]\left(\frac{\lambda}{|y|}\right)^{2n}f(y^{\lambda},u(y^{\lambda})) \\
 \nonumber && -\left[G_{\alpha}(x,y)-\left(\frac{\lambda}{|x|}\right)^{n-\alpha}G_{\alpha}(x^{\lambda},y)\right]f(y,u(y))\Bigg\}dy \\
 \nonumber && +\int_{|y|>\lambda^{2}}\left[\left(\frac{\lambda}{|x|}\right)^{n-\alpha}G_{\alpha}(x^{\lambda},y)-G_{\alpha}(x,y)\right]f(y,u(y))dy.
\end{eqnarray}

Now we need the following Lemma on properties of the Green's function $G_{\alpha}(x,y)$.
\begin{lem}\label{G-e}
The Green's function $G_{\alpha}(x,y)$ satisfies the following point-wise estimates:
\begin{flalign}
\nonumber &\text{$(i)\,\, 0\leq G_{\alpha}(x,y)\leq\frac{C'}{|x-y|^{n-\alpha}}, \quad\quad \forall \,\, x,y\in\mathbb{R}^{n};$}& \\
\nonumber &\text{$(ii) \,\, G_{\alpha}(x,y)\geq\frac{C''}{|x-y|^{n-\alpha}}, \quad\quad \forall \,\, |x|,|y|\geq2;$}& \\
\nonumber &\text{$(iii) \,\, \left(\frac{\lambda}{|x|}\right)^{n-\alpha}G_{\alpha}(x^{\lambda},y)-G_{\alpha}(x,y)\leq0, \quad\quad \forall \,\, \lambda<|x|<\lambda^{2}, \,\, \lambda<|y|<+\infty;$}& \\
\nonumber &\text{$(iv) \,\, \left(\frac{\lambda^{2}}{|x|\cdot|y|}\right)^{n-\alpha}G_{\alpha}(x^{\lambda},y^{\lambda})-\left(\frac{\lambda}{|y|}\right)^{n-\alpha}
  G_{\alpha}(x,y^{\lambda})\leq G_{\alpha}(x,y)-\left(\frac{\lambda}{|x|}\right)^{n-\alpha}G_{\alpha}(x^{\lambda},y),$}& \\
\nonumber &\text{$\qquad\qquad\qquad\qquad\qquad\qquad\qquad\qquad\qquad\qquad\qquad\qquad\qquad\qquad\quad \forall \,\, \lambda<|x|,|y|<\lambda^{2}.$}&
\end{flalign}
\end{lem}

Lemma \ref{G-e} can be proved by direct calculations, so we omit the details here.

From Lemma \ref{G-e} and the integral equations \eqref{omega-e}, one can derive that, for any $x\in B_{\lambda^{2}}(0)\setminus\overline{B_{\lambda}(0)}$,
\begin{eqnarray}\label{2-37-e}
  &&\omega^{\lambda}(x)=u_{\lambda}(x)-u(x) \\
 \nonumber &\leq&\int_{\lambda<|y|<\lambda^{2}}\Bigg[G_{\alpha}(x,y)-\left(\frac{\lambda}{|x|}\right)^{n-\alpha}G_{\alpha}(x^{\lambda},y)\Bigg] \left[\left(\frac{\lambda}{|y|}\right)^{n+\alpha}f(y^{\lambda},u(y^{\lambda}))-f(y,u(y))\right]dy\\
\nonumber &<&\int_{\lambda<|y|<\lambda^{2}}\Bigg(G_{\alpha}(x,y)-\left(\frac{\lambda}{|x|}\right)^{n-\alpha}G_{\alpha}(x^{\lambda},y)\Bigg) \left[f(y,u_{\lambda}(y))-f(y,u(y))\right]dy\\
\nonumber &\leq&C\int_{\left(B_{\lambda^{2}}\setminus\overline{B_{\lambda}}\right)^{+}}\frac{1}{|x-y|^{n-\alpha}}\left[f(y,u_{\lambda}(y))-f(y,u(y))\right]dy\\
\nonumber &=&C\int_{\left(B_{\lambda^{2}}\setminus\overline{B_{\lambda}}\right)^{+}}\frac{1}{|x-y|^{n-\alpha}}\cdot\frac{f(y,u_{\lambda}(y))-f(y,u(y))}{u_{\lambda}(y)-u(y)}
\omega^{\lambda}(y)dy,
\end{eqnarray}
where we have used the subcritical condition on $f(x,u)$ for $\mu=\left(\frac{\lambda}{|y|}\right)^{n-\alpha}<1$ to derive the second inequality and the assumption $(\mathbf{f_{1}})$ on $f(x,u)$ to derive the third inequality.

By Hardy-Littlewood-Sobolev inequality and \eqref{2-37-e}, we have, for any $\frac{n}{n-\alpha}<q<\infty$,
\begin{eqnarray}\label{3-14-e}
 && \|\omega^{\lambda}\|_{L^{q}((B_{\lambda^{2}}\setminus\overline{B_{\lambda}})^{+})}\leq C\left\|\frac{f(y,u_{\lambda}(y))-f(y,u(y))}{u_{\lambda}(y)-u(y)}
\omega^{\lambda}(y)\right\|_{L^{\frac{nq}{n+\alpha q}}((B_{\lambda^{2}}\setminus\overline{B_{\lambda}})^{+})}\\
  \nonumber &\leq& C\left\|\frac{f(y,u_{\lambda}(y))-f(y,u(y))}{u_{\lambda}(y)-u(y)}\right\|_{L^{\frac{n}{\alpha}}((B_{\lambda^{2}}\setminus\overline{B_{\lambda}})^{+})}
  \cdot\|\omega^{\lambda}\|_{L^{q}((B_{\lambda^{2}}\setminus\overline{B_{\lambda}})^{+})}.
\end{eqnarray}
Since $u\in C(\overline{\Omega_{1}})$ and $f(x,u)$ satisfies the assumption $(\mathbf{f_{2}})$, there exists a $\epsilon_{0}>0$ small enough, such that
\begin{equation}\label{3-15-e}
  C\left\|\frac{f(y,u_{\lambda}(y))-f(y,u(y))}{u_{\lambda}(y)-u(y)}\right\|_{L^{\frac{n}{\alpha}}((B_{\lambda^{2}}\setminus\overline{B_{\lambda}})^{+})}\leq\frac{1}{2}
\end{equation}
for all $1<\lambda\leq1+\epsilon_{0}$, and hence \eqref{3-14-e} implies
\begin{equation}\label{3-16-e}
  \|\omega^{\lambda}\|_{L^{q}((B_{\lambda^{2}}\setminus\overline{B_{\lambda}})^{+})}=0,
\end{equation}
which means $(B_{\lambda^{2}}\setminus\overline{B_{\lambda}})^{+}=\emptyset$. Therefore, we have proved for all $1<\lambda\leq1+\epsilon_{0}$, $(B_{\lambda^{2}}\setminus\overline{B_{\lambda}})^{+}=\emptyset$, that is,
\begin{equation}\label{3-17-e}
  \omega^{\lambda}(x)\leq0, \,\,\,\,\,\,\, \forall \, x\in B_{\lambda^{2}}(0)\setminus\overline{B_{\lambda}(0)}.
\end{equation}
This completes Step 1.

\emph{Step 2. Dilate the sphere $S_{\lambda}$ outward until $\lambda=+\infty$ to derive lower bound estimates on asymptotic behaviour of $u$ as $|x|\rightarrow+\infty$.} Step 1 provides us a start point to dilate the sphere $S_{\lambda}$ from near $\lambda=1$. Now we dilate the sphere $S_{\lambda}$ outward as long as \eqref{2-7-e} holds. Let
\begin{equation}\label{2-29-e}
  \lambda_{0}:=\sup\{1<\lambda<+\infty\,|\, \omega^{\mu}\leq0 \,\, in \,\, B_{\mu^{2}}(0)\setminus\overline{B_{\mu}(0)}, \,\, \forall \, 1<\mu\leq\lambda\}\in(1,+\infty],
\end{equation}
and hence, one has
\begin{equation}\label{2-30-e}
  \omega^{\lambda_{0}}(x)\leq0, \quad\quad \forall \,\, x\in B_{\lambda_{0}^{2}}(0)\setminus\overline{B_{\lambda_{0}}(0)}.
\end{equation}
In what follows, we will prove $\lambda_{0}=+\infty$ by contradiction arguments.

Suppose on contrary that $1<\lambda_{0}<+\infty$. In order to get a contradiction, we will first prove
\begin{equation}\label{2-31-e}
  \omega^{\lambda_{0}}(x)\equiv0, \,\,\,\,\,\,\forall \, x\in B_{\lambda_{0}^{2}}(0)\setminus\overline{B_{\lambda_{0}}(0)}
\end{equation}
by using contradiction arguments.

Suppose on contrary that \eqref{2-31-e} does not hold, that is, $\omega^{\lambda_{0}}\leq0$ but $\omega^{\lambda_{0}}$ is not identically zero in $B_{\lambda_{0}^{2}}(0)\setminus\overline{B_{\lambda_{0}}(0)}$, then there exists a $x^{0}\in B_{\lambda_{0}^{2}}(0)\setminus\overline{B_{\lambda_{0}}(0)}$ such that $\omega^{\lambda_{0}}(x^{0})<0$. We will obtain a contradiction with \eqref{2-29-e} via showing that the sphere $S_{\lambda}$ can be dilated outward a little bit further, more precisely, there exists a $\varepsilon>0$ small enough such that $\omega^{\lambda}\leq0$ in $B_{\lambda^{2}}(0)\setminus\overline{B_{\lambda}(0)}$ for all $\lambda\in[\lambda_{0},\lambda_{0}+\varepsilon]$.

For that purpose, we will first show that
\begin{equation}\label{2-32-e}
  \omega^{\lambda_{0}}(x)<0, \,\,\,\,\,\, \forall \, x\in B_{\lambda_{0}^{2}}(0)\setminus\overline{B_{\lambda_{0}}(0)}.
\end{equation}
Indeed, since we have assumed there exists a point $x^{0}\in B_{\lambda_{0}^{2}}(0)\setminus\overline{B_{\lambda_{0}}(0)}$ such that $\omega^{\lambda_{0}}(x^{0})<0$, by continuity, there exists a small $\delta>0$ and a constant $c_{0}>0$ such that
\begin{equation}\label{2-33-e}
B_{\delta}(x^{0})\subset B_{\lambda_{0}^{2}}(0)\setminus\overline{B_{\lambda_{0}}(0)} \,\,\,\,\,\, \text{and} \,\,\,\,\,\,
\omega^{\lambda_{0}}(x)\leq -c_{0}<0, \,\,\,\,\,\,\,\, \forall \, x\in B_{\delta}(x^{0}).
\end{equation}
Since $f(x,u)$ is subcritical and satisfies the assumption $(\mathbf{f_{1}})$, one can derive from \eqref{2-33-e}, Lemma \ref{G-e} and \eqref{2-37-e} that, for any $x\in B_{\lambda_{0}^{2}}(0)\setminus\overline{B_{\lambda_{0}}(0)}$,
\begin{eqnarray}\label{9-37-e}
  &&\omega^{\lambda_{0}}(x)=u_{\lambda_{0}}(x)-u(x) \\
  \nonumber &\leq&\int_{\lambda_{0}<|y|<\lambda_{0}^{2}}\Bigg[G_{\alpha}(x,y)-\left(\frac{\lambda_{0}}{|x|}\right)^{n-\alpha}G_{\alpha}(x^{\lambda_{0}},y)\Bigg] \left[\left(\frac{\lambda_{0}}{|y|}\right)^{n+\alpha}f(y^{\lambda_{0}},u(y^{\lambda_{0}}))-f(y,u(y))\right]dy\\
\nonumber &<&\int_{B_{\delta}(x_{0})}\Bigg(G_{\alpha}(x,y)-\left(\frac{\lambda_{0}}{|x|}\right)^{n-\alpha}G_{\alpha}(x^{\lambda_{0}},y)\Bigg) \left[f(y,u_{\lambda_{0}}(y))-f(y,u(y))\right]dy\leq0,
\end{eqnarray}
thus we arrive at \eqref{2-32-e}.

Now, we choose a $0<r_{0}<\frac{1}{4}\min\{\lambda_{0}^{2}-\lambda_{0},1\}$ small enough, such that
\begin{equation}\label{9-0e}
  C\left\|\frac{f(y,u_{\lambda}(y))-f(y,u(y))}{u_{\lambda}(y)-u(y)}\right\|_{L^{\frac{n}{\alpha}}\big(A_{\lambda_{0}+r_{0},r_{0}}\cup A_{\lambda_{0}^{2}+r_{0},2r_{0}}\big)}\leq\frac{1}{2}
\end{equation}
for any $\lambda\in[\lambda_{0},\lambda_{0}+\frac{r_{0}}{4}]$, where the constant $C$ is the same as in \eqref{3-15-e} and the narrow region
\begin{equation}\label{9-1e}
  A_{r,l}:=\left\{x\in B_{r}(0)\,\big|\,|x|>r-l\right\}
\end{equation}
for $r>0$ and $0<l<r$. By \eqref{2-37-e}, one can easily verify that inequality as \eqref{3-14-e} (with the same constant $C$) also holds for any $\lambda\in[\lambda_{0},\lambda_{0}+\frac{r_{0}}{4}]$, that is, for any $\frac{n}{n-\alpha}<q<\infty$,
\begin{equation}\label{9-2e}
  \|\omega^{\lambda}\|_{L^{q}\left((B_{\lambda^{2}}\setminus\overline{B_{\lambda}})^{+}\right)}\leq C\left\|\frac{f(y,u_{\lambda}(y))-f(y,u(y))}{u_{\lambda}(y)-u(y)}\right\|_{L^{\frac{n}{\alpha}}\left((B_{\lambda^{2}}\setminus\overline{B_{\lambda}})^{+}\right)}
  \cdot\|\omega^{\lambda}\|_{L^{q}\left((B_{\lambda^{2}}\setminus\overline{B_{\lambda}})^{+}\right)}.
\end{equation}

By \eqref{2-32-e}, we can define
\begin{equation}\label{2-40-e}
  M_{0}:=\sup_{x\in \overline{B_{\lambda_{0}^{2}-r_{0}}(0)}\setminus B_{\lambda_{0}+r_{0}}(0)}\omega^{\lambda_{0}}(x)<0.
\end{equation}
Since $u$ is uniformly continuous on arbitrary compact set $K\subset\overline{\Omega_{1}}$ (say, $K=\{x\in\overline{\Omega_{1}}\,|\,\lambda_{0}+r_{0}\leq|x|\leq2(\lambda_{0}^{2}-r_{0})\}$), we can deduce from \eqref{2-40-e} that, there exists a $0<\varepsilon_{0}<\frac{r_{0}}{4}$ sufficiently small, such that, for any $\lambda\in[\lambda_{0},\lambda_{0}+\varepsilon_{0}]$,
\begin{equation}\label{2-41-e}
  \omega^{\lambda}(x)\leq\frac{M_{0}}{2}<0, \,\,\,\,\,\, \forall \, x\in \overline{B_{\lambda_{0}^{2}-r_{0}}(0)}\setminus B_{\lambda_{0}+r_{0}}(0).
\end{equation}

For any $\lambda\in[\lambda_{0},\lambda_{0}+\varepsilon_{0}]$, it follows from \eqref{2-41-e} that
\begin{equation}\label{9-4e}
  (B_{\lambda^{2}}\setminus\overline{B_{\lambda}})^{+}\subset A_{\lambda_{0}+r_{0},r_{0}}\cup A_{\lambda_{0}^{2}+r_{0},2r_{0}}.
\end{equation}
As a consequence of \eqref{9-0e}, \eqref{9-2e} and \eqref{9-4e}, we get
\begin{equation}\label{9-5e}
  \|\omega^{\lambda}\|_{L^{q}\left((B_{\lambda^{2}}\setminus\overline{B_{\lambda}})^{+}\right)}=0,
\end{equation}
and hence $(B_{\lambda^{2}}\setminus\overline{B_{\lambda}})^{+}=\emptyset$ for all $\lambda\in[\lambda_{0},\lambda_{0}+\varepsilon_{0}]$, that is,
\begin{equation}\label{2-45-e}
  \omega^{\lambda}(x)\leq0, \,\,\,\,\,\,\, \forall \,\, x\in B_{\lambda^{2}}(0)\setminus\overline{B_{\lambda}(0)},
\end{equation}
which contradicts with the definition \eqref{2-29-e} of $\lambda_{0}$. As a consequence, in the case $1<\lambda_{0}<+\infty$, \eqref{2-31-e} must hold true, that is,
\begin{equation}\label{2-46-e}
  \omega^{\lambda_{0}}\equiv0 \,\,\,\,\,\, \text{in} \,\,\, B_{\lambda_{0}^{2}}(0)\setminus\overline{B_{\lambda_{0}}(0)}.
\end{equation}

However, by subcritical condition on $f(x,u)$, the first inequality in \eqref{9-37-e} and \eqref{2-46-e}, we arrive at
\begin{eqnarray}\label{2-47-e}
 && 0=\omega^{\lambda_{0}}(x)=u_{\lambda_{0}}(x)-u(x)\\
 \nonumber &\leq&\int_{\lambda_{0}<|y|<\lambda_{0}^{2}}\Bigg[G_{\alpha}(x,y)-\left(\frac{\lambda_{0}}{|x|}\right)^{n-\alpha}G_{\alpha}(x^{\lambda_{0}},y)\Bigg] \left[\left(\frac{\lambda_{0}}{|y|}\right)^{n+\alpha}f(y^{\lambda_{0}},u(y^{\lambda_{0}}))-f(y,u(y))\right]dy\\
\nonumber &<&\int_{\lambda_{0}<|y|<\lambda_{0}^{2}}\Bigg(G_{\alpha}(x,y)-\left(\frac{\lambda_{0}}{|x|}\right)^{n-\alpha}G_{\alpha}(x^{\lambda_{0}},y)\Bigg) \left[f(y,u_{\lambda_{0}}(y))-f(y,u(y))\right]dy=0
\end{eqnarray}
for any $x\in B_{\lambda_{0}^{2}}(0)\setminus\overline{B_{\lambda_{0}}(0)}$, which is absurd. Thus we must have $\lambda_{0}=+\infty$, that is,
\begin{equation}\label{2-48-e}
  u(x)\geq\left(\frac{\lambda}{|x|}\right)^{n-\alpha}u\left(\frac{\lambda^{2}x}{|x|^{2}}\right), \quad\quad \forall \,\, \lambda\leq|x|\leq\lambda^{2}, \quad \forall \,\, 1<\lambda<+\infty.
\end{equation}
For arbitrary $\sigma\leq|x|<+\infty$, let $1<\sqrt{|x|}<\lambda:=\sqrt{\sigma|x|}\leq|x|<+\infty$, then \eqref{2-48-e} yields that
\begin{equation}\label{2-49-e}
  u(x)\geq\left(\frac{\sigma}{|x|}\right)^{\frac{n-\alpha}{2}}u\left(\frac{\sigma x}{|x|}\right),
\end{equation}
and hence, we arrive at the following lower bound estimate on asymptotic behaviour of $u$ as $|x|\rightarrow+\infty$:
\begin{equation}\label{2-50-e}
  u(x)\geq\left(\min_{x\in S_{\sigma}}u(x)\right)\left(\frac{\sigma}{|x|}\right)^{\frac{n-\alpha}{2}}:=\frac{C_{0}}{|x|^{\frac{n-\alpha}{2}}}, \quad\quad \forall \,\, \sigma\leq|x|<\infty.
\end{equation}

The lower bound estimate \eqref{2-50-e} can be improved remarkably by using the ``Bootstrap" iteration technique and the integral equations \eqref{IEe}.

In fact, let $\mu_{0}:=\frac{n-\alpha}{2}$, we infer from the assumption $(\mathbf{f_{3}})$ on $f(x,u)$, the integral equations \eqref{IEe}, Lemma \ref{G-e} and \eqref{2-50-e} that, for any $2\sigma\leq|x|<+\infty$,
\begin{eqnarray}\label{2-51-e}
  u(x)&\geq&\overline{C}\int_{\mathcal{C}\cap\{2|x|\leq|y|\leq4|x|\}}G_{\alpha}(x,y)|y|^{\tau}\frac{1}{|y|^{p\mu_{0}}}dy \\
  \nonumber &\geq&C\int_{\mathcal{C}\cap\{2|x|\leq|y|\leq4|x|\}}\frac{1}{|x-y|^{n-\alpha}}\cdot\frac{1}{|y|^{p\mu_{0}-\tau}}dy \\
  \nonumber &\geq&\frac{C}{|x|^{n-\alpha}}\int^{4|x|}_{2|x|}r^{n-1-p\mu_{0}+\tau}dr \\
  \nonumber &\geq&\frac{C_{1}}{|x|^{p\mu_{0}-(\alpha+\tau)}}.
\end{eqnarray}
Now, let $\mu_{1}:=p\mu_{0}-(\alpha+\tau)$. Due to $0<p<p_{s}(\tau):=\frac{n+\alpha+2\tau}{n-\alpha}$, our important observation is
\begin{equation}\label{2-52-e}
  \mu_{1}:=p\mu_{0}-(\alpha+\tau)<\mu_{0}.
\end{equation}
Thus we have obtained a better lower bound estimate than \eqref{2-50-e} after one iteration, that is,
\begin{equation}\label{2-53-e}
  u(x)\geq\frac{C_{1}}{|x|^{\mu_{1}}}, \quad\quad \forall \,\, 2\sigma\leq|x|<+\infty.
\end{equation}

For $k=0,1,2,\cdots$, define
\begin{equation}\label{2-54-e}
  \mu_{k+1}:=p\mu_{k}-(\alpha+\tau).
\end{equation}
Since $0<p<p_{s}(\tau):=\frac{n+\alpha+2\tau}{n-\alpha}$, it is easy to see that the sequence $\{\mu_{k}\}$ is monotone decreasing with respect to $k$. Continuing the above iteration process involving the integral equation \eqref{IEe}, we have the following lower bound estimates for every $k=0,1,2,\cdots$,
\begin{equation}\label{2-55-e}
  u(x)\geq\frac{C_{k}}{|x|^{\mu_{k}}}, \quad\quad \forall \,\, 2\sigma\leq|x|<+\infty.
\end{equation}
Now Theorem \ref{lower-e} follows easily from the obvious properties that as $k\rightarrow+\infty$,
\begin{equation}\label{2-56-e}
   \mu_{k}\rightarrow-\frac{\tau+\alpha}{1-p} \quad \text{if} \,\, 0<p<1;
  \quad\quad \mu_{k}\rightarrow-\infty \quad \text{if} \,\, 1\leq p<\frac{n+\alpha+2\tau}{n-\alpha}.
\end{equation}
This finishes our proof of Theorem \ref{lower-e}.
\end{proof}

One can easily observe that the lower bound estimates in Theorem \ref{lower-e} contradicts with the following integrability
\begin{equation}\label{2-57-e}
  C\int_{\mathcal{C}\cap\Omega_{2\sigma}}\frac{|x|^{a}u^{p}(x)}{|2\sigma e_{n}-x|^{n-\alpha}}dx\leq u(2\sigma e_{n})<+\infty
\end{equation}
indicated by the integral equations \eqref{IEe}, where the unit vector $e_{n}:=(0,\cdots,0,1)$. Therefore, we must have $u\equiv0$ in $\overline{\Omega_{1}}$, that is, the unique nonnegative solution to IEs \eqref{IEe} is $u\equiv0$ in $\overline{\Omega_{1}}$. This finishes our proof of Theorem \ref{Thm0} in the noncritical cases $0<\alpha<n$.

\subsection{The critical order case $\alpha=n=2$}

In the critical order case, the Green's function in the equivalent integral equations \eqref{IEe} takes the form:
\begin{eqnarray}\label{GREENe-0c}
  G_{2}(x,y)&:=&C\int_{0}^{\frac{(|x|^{2}-1)(|y|^{2}-1)}{|x-y|^{2}}}\frac{1}{1+b}db=C\ln\left[1+\frac{(|x|^{2}-1)(|y|^{2}-1)}{|x-y|^{2}}\right] \\
 \nonumber &=&2C\left(\ln\frac{1}{|x-y|}-\ln\frac{1}{\left|\frac{x}{|x|}-|x|y\right|}\right)\quad\quad \text{if} \,\,\, x,y\in \Omega_{1},
\end{eqnarray}
and $G_{2}(x,y):=0$ if $x$ or $y\in\mathbb{R}^{2}\setminus \Omega_{1}$.

Next, we will apply the method of scaling spheres to show the following lower bound estimate of positive solution $u$ for $|x|$ large, which will contradict with the integral equations \eqref{IEe}.
\begin{thm}\label{lower-ec}
Assume $\alpha=n=2$, $f(x,u)$ is subcritical and satisfies assumptions $(\mathbf{f_{1}})$, $(\mathbf{f_{2}})$ and $(\mathbf{f_{3}})$. Suppose $u$ is a positive solution to integral equations \eqref{IEe}, then it satisfies the following lower bound estimate:
\begin{equation}\label{lb1-ec}
  \inf_{|x|\geq\sigma}u(x)\geq C_{0}>0.
\end{equation}
\end{thm}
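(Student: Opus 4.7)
The plan is to rerun the scaling spheres argument of Theorem \ref{lower-e} in the critical case $\alpha=n=2$. The main simplification is that the Kelvin conformal factor $(\lambda/|x|)^{n-\alpha}$ equals $1$, so the natural Kelvin transform is simply $u_\lambda(x):=u(\lambda^2 x/|x|^2)$ with $x^\lambda:=\lambda^2 x/|x|^2$, and several formulas from Theorem \ref{lower-e} collapse. The genuine new difficulty is that $G_2$ is a logarithmic kernel, so the Hardy-Littlewood-Sobolev step used in the non-critical proof must be replaced by a sub-Riesz variant.

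First I would establish the critical-order counterpart of Lemma \ref{G-e} for $G_2$. From the explicit formula
$$G_2(x,y)=2C\left(\ln\frac{1}{|x-y|}-\ln\frac{1}{\bigl|\tfrac{x}{|x|}-|x|y\bigr|}\right)\qquad(x,y\in\Omega_1),$$
direct computation yields: $G_2\ge0$ on $\Omega_1\times\Omega_1$; the sub-Riesz majorant $G_2(x,y)\le C_\epsilon(|x-y|^{-\epsilon}+1)$ on bounded subsets for any small $\epsilon>0$ (since $\ln(1/t)\le C_\epsilon t^{-\epsilon}$ near $t=0$); the one-sided comparison $G_2(x^\lambda,y)\le G_2(x,y)$ for $\lambda<|x|<\lambda^2$, $|y|>\lambda$; and the four-term monotonicity $G_2(x^\lambda,y^\lambda)-G_2(x,y^\lambda)\le G_2(x,y)-G_2(x^\lambda,y)$ for $\lambda<|x|,|y|<\lambda^2$, all with the conformal factor set equal to $1$.

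Next I would mimic Step~1 of the proof of Theorem \ref{lower-e} to derive the integral inequality
$$\omega^\lambda(x)\le C\int_{(B_{\lambda^2}\setminus\overline{B_\lambda})^+}G_2(x,y)\,\frac{f(y,u_\lambda(y))-f(y,u(y))}{u_\lambda(y)-u(y)}\,\omega^\lambda(y)\,dy,$$
invoking the critical-order subcriticality $\mu^n f(\mu x,u)\uparrow$ with $\mu=\lambda/|y|\le1$ together with assumption $(\mathbf{f_{1}})$. To start the scaling spheres near $\lambda=1$, I would replace $G_2$ by its sub-Riesz majorant $C_\epsilon|x-y|^{-\epsilon}$ for a small fixed $\epsilon\in(0,2)$, apply the Hardy-Littlewood-Sobolev inequality with effective order $\alpha':=2-\epsilon$, and then use assumption $(\mathbf{f_{2}})$ (which provides local Lipschitz control of $(|x|-1)^\theta f$ with $\theta<\alpha/n=1$, hence also $<\alpha'/n$ once $\epsilon$ is small) together with the continuity of $u$ on compact subsets of $\overline{\Omega_1}$ to force the difference-quotient $L^{n/\alpha'}$ norm below $1/(2C)$ on the narrow annulus. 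This gives $(B_{\lambda^2}\setminus\overline{B_\lambda})^+=\emptyset$ for $\lambda$ slightly above $1$. The outward-dilation step, namely the definition of $\lambda_0$ and the proof that $\lambda_0=+\infty$, then transcribes essentially verbatim from Step~2 of the proof of Theorem \ref{lower-e}, since only the integral inequality, narrow-region smallness, and continuity of $u$ are used.

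From $\lambda_0=+\infty$ I conclude $u(x)\ge u(\lambda^2 x/|x|^2)$ for all $1<\lambda\le|x|\le\lambda^2$. For arbitrary $|x|\ge\sigma$, choosing $\lambda=\sqrt{\sigma|x|}$ puts $x^\lambda=\sigma x/|x|\in S_\sigma$, so
$$u(x)\ge u(\sigma x/|x|)\ge\min_{y\in S_\sigma}u(y)=:C_0>0$$
by the positivity and continuity of $u$ on the compact sphere $S_\sigma$, which is exactly \eqref{lb1-ec}. The hard part is replacing Hardy-Littlewood-Sobolev in the narrow-region estimate, because $\alpha=n$ is the endpoint where HLS fails; the route above trades the logarithm for a small power, losing $\epsilon$ in the integrability exponent, and this is harmless precisely because the Lipschitz exponent $\theta$ in $(\mathbf{f_{2}})$ is strictly below $\alpha/n=1$. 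Unlike in Theorem \ref{lower-e}, no bootstrap improvement of the lower bound is obtained from the scaling spheres themselves — only the uniform positive constant \eqref{lb1-ec}, which is exactly the form needed to drive the subsequent integrability contradiction against \eqref{IEe}.
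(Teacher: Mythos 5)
Your proposal is correct and follows essentially the same route as the paper: the conformal-factor-free Kelvin transform, sub-Riesz control of the logarithmic Green's function, a narrow-region contraction estimate made possible by $\theta<\alpha/n=1$ in $(\mathbf{f_{2}})$, and the two-step dilation of $S_\lambda$ to $\lambda_0=+\infty$ yielding $u(x)\ge u(\sigma x/|x|)$. The only cosmetic difference is that the paper phrases the kernel bound as an explicit near/far dichotomy in $|x-y|$ (Riesz piece handled by Hardy--Littlewood--Sobolev, bounded piece by H\"older, giving two norms of the difference quotient to control) whereas your single pointwise majorant $G_2\le C_\epsilon(|x-y|^{-\epsilon}+1)$ on bounded annuli encodes the same split and leads to the identical two-term estimate.
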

\begin{proof}
Given any $\lambda>1$, we define the Kelvin transform of $u$ centered at $0$ by
\begin{equation}\label{Kelvin-ec}
  u_{\lambda}(x):=u\left(\frac{\lambda^{2}x}{|x|^{2}}\right)
\end{equation}
for arbitrary $x\in\{x\in\overline{\Omega_{1}}\,|\,1\leq|x|\leq\lambda^{2}\}$.

Now, we will carry out the process of scaling spheres in $\Omega_{1}$ with respect to the origin $0\in\mathbb{R}^{n}$.

Let $\lambda>1$ be an arbitrary real number and let $\omega^{\lambda}(x):=u_{\lambda}(x)-u(x)$ for any $x\in B_{\lambda^{2}}(0)\setminus\overline{B_{\lambda}(0)}$. We will first show that, for $\lambda>1$ sufficiently close to $1$,
\begin{equation}\label{2-7-ec}
  \omega^{\lambda}(x)\leq0, \,\,\,\,\,\, \forall \,\, x\in B_{\lambda^{2}}(0)\setminus\overline{B_{\lambda}(0)}.
\end{equation}
Then, we start dilating the circle $S_{\lambda}:=\{x\in\mathbb{R}^{2}\,|\,|x|=\lambda\}$ from near the unit circle $S_{1}$ outward as long as \eqref{2-7-ec} holds, until its limiting position $\lambda=+\infty$ and derive lower bound estimates of $u$ for $|x|$ large. Therefore, the scaling sphere process can be divided into two steps.

\emph{Step 1. Start dilating the circle $S_{\lambda}$ from near $\lambda=1$.} Define
\begin{equation}\label{2-8-ec}
  (B_{\lambda^{2}}\setminus\overline{B_{\lambda}})^{+}:=\{x\in B_{\lambda^{2}}(0)\setminus\overline{B_{\lambda}(0)} \, | \, \omega^{\lambda}(x)>0\}.
\end{equation}
We will show that, for $\lambda>1$ sufficiently close to $1$,
\begin{equation}\label{2-9-ec}
  (B_{\lambda^{2}}\setminus\overline{B_{\lambda}})^{+}=\emptyset.
\end{equation}

Since $u$ is a positive solution to integral equations \eqref{IEe}, through direct calculations, we get, for any $\lambda>1$,
\begin{equation}\label{2-34-ec}
  u(x)=\int_{|y|>\lambda}G_{2}(x,y)f(y,u(y))dy+\int_{B_{\lambda^{2}}(0)\setminus\overline{B_{\lambda}(0)}}G_{2}(x,y^{\lambda})
  \left(\frac{\lambda}{|y|}\right)^{4}f(y^{\lambda},u_{\lambda}(y))dy
\end{equation}
for any $x\in\overline{\Omega_{1}}$. By direct calculations, one can also verify that $u_{\lambda}$ satisfies the following integral equation
\begin{equation}\label{2-35-ec}
  u_{\lambda}(x)=\int_{|y|>1}G_{2}(x^{\lambda},y)f(y,u(y))dy
\end{equation}
for any $x\in\{x\in\overline{\Omega_{1}}\,|\,1\leq|x|\leq\lambda^{2}\}$, and hence, it follows immediately that
\begin{eqnarray}\label{2-36-ec}
  u_{\lambda}(x)&=&\int_{|y|>\lambda}G_{2}(x^{\lambda},y)f(y,u(y))dy \\
 \nonumber \quad\quad &&+\int_{B_{\lambda^{2}}(0)\setminus\overline{B_{\lambda}(0)}}G_{2}(x^{\lambda},y^{\lambda})
 \left(\frac{\lambda}{|y|}\right)^{4}f(y^{\lambda},u_{\lambda}(y))dy.
\end{eqnarray}
Therefore, we have, for any $x\in B_{\lambda^{2}}(0)\setminus\overline{B_{1}(0)}$,
\begin{eqnarray}\label{omega-ec}
  && \omega^{\lambda}(x)=u_{\lambda}(x)-u(x)\\
  \nonumber &=&\int_{B_{\lambda^{2}}(0)\setminus\overline{B_{\lambda}(0)}}\bigg\{\left[G_{2}(x^{\lambda},y^{\lambda})
  -G_{2}(x,y^{\lambda})\right]\left(\frac{\lambda}{|y|}\right)^{4}f(y^{\lambda},u_{\lambda}(y)) \\
 \nonumber && -\left[G_{2}(x,y)-G_{2}(x^{\lambda},y)\right]f(y,u(y))\bigg\}dy+\int_{|y|>\lambda^{2}}\left[G_{2}(x^{\lambda},y)-G_{2}(x,y)\right]f(y,u(y))dy.
\end{eqnarray}

Now we need some basic properties about the Green's function $G_{2}(x,y)$. From \eqref{GREENe-0c}, one can obtain that for any $x,\,y \in B_{\lambda^{2}}(0)\setminus\overline{B_{1}(0)}$, $x\neq y$,
\begin{equation}\label{GP-ec}
G_{2}(x,y)=C\ln\left[1+\frac{(|x|^{2}-1)(|y|^{2}-1)}{|x-y|^{2}}\right]\leq C \ln{\left(1+\frac{\lambda^4}{|x-y|^2}\right)}.
\end{equation}
It is well known that
\begin{equation}\label{wk-ec}
\ln{(1+t)}=o(t^\varepsilon), \,\,\quad \text{as}\,\, t\rightarrow +\infty,
\end{equation}
where $\varepsilon$ is an arbitrary positive real number. This implies, for any given $\varepsilon>0$, there exists a $\delta(\varepsilon)>0$ such that
\begin{equation}\label{ln-ec}
\ln{(1+t)}\leq t^\varepsilon, \,\,\qquad \forall \, t>\frac{1}{{\delta(\varepsilon)}^2}.
\end{equation}

Therefore, by \eqref{GP-ec}, \eqref{ln-ec} and straightforward calculations, we have the following Lemma on properties of the Green's function $G_{2}(x,y)$.
\begin{lem}\label{G-ec}
The Green's function $G_{2}(x,y)$ satisfies the following point-wise estimates:
\begin{flalign}
\nonumber &\text{$(i)\,\, G_{2}(x,y)\leq C\lambda^{4\varepsilon} \frac{1}{|x-y|^{2\varepsilon}}, \,\,\qquad \forall\,\,\,1<|x|,\,|y|<\lambda^{2}, \, |x-y|<\lambda^{2}\delta(\varepsilon);$}& \\
\nonumber &\text{$(ii) \,\, G_{2}(x,y)\leq C \ln\left(1+\frac{1}{{\delta(\varepsilon)}^2}\right), \,\,\qquad \forall\,\,\,1<|x|,\,|y|<\lambda^{2}, \,
|x-y|\geq\lambda^{2}\delta(\varepsilon);$}& \\
\nonumber &\text{$(iii) \,\, G_{2}(x,y)\geq C>0, \,\,\qquad \forall\,\,\,|x|,\,|y|\geq2;$}& \\
\nonumber &\text{$(iv) \,\, G_{2}(x^{\lambda},y)-G_{2}(x,y)\leq0, \quad\quad \forall \,\, \lambda<|x|<\lambda^{2}, \,\, \lambda<|y|<+\infty;$}& \\
\nonumber &\text{$(v) \,\, G_{2}(x^{\lambda},y^{\lambda})-G_{2}(x,y^{\lambda})\leq G_{2}(x,y)-G_{2}(x^{\lambda},y), \qquad \forall \,\, \lambda<|x|,|y|<\lambda^{2}.$}&
\end{flalign}
\end{lem}

Lemma \ref{G-ec} can be proved by direct calculations, so we omit the details here.

From Lemma \ref{G-ec} and \eqref{omega-ec}, one can derive that, for any $x\in B_{\lambda^{2}}(0)\setminus\overline{B_{\lambda}(0)}$,
\begin{eqnarray}\label{2-37-ec}
  &&\omega^{\lambda}(x)=u_{\lambda}(x)-u(x) \\
 \nonumber &\leq&\int_{\lambda<|y|<\lambda^{2}}\left[G_{2}(x,y)-G_{2}(x^{\lambda},y)\right]
 \left[\left(\frac{\lambda}{|y|}\right)^{4}f(y^{\lambda},u_{\lambda}(y))-f(y,u(y))\right]dy\\
\nonumber &<&\int_{\lambda<|y|<\lambda^{2}}\left(G_{2}(x,y)-G_{2}(x^{\lambda},y)\right)\left(f(y,u_{\lambda}(y))-f(y,u(y))\right)dy\\
\nonumber &\leq&C\int_{\left(B_{\lambda^{2}}\setminus\overline{B_{\lambda}}\right)^{+}}G_{2}(x,y)\frac{f(y,u_{\lambda}(y))-f(y,u(y))}{u_{\lambda}(y)-u(y)}
\omega^{\lambda}(y)dy\\
\nonumber &\leq&C\lambda^{4\varepsilon}\int_{\left(B_{\lambda^{2}}\setminus\overline{B_{\lambda}}\right)^{+}\cap B_{\lambda^{2}\delta(\varepsilon)}(x)}\frac{1}{|x-y|^{2\varepsilon}}\cdot\frac{f(y,u_{\lambda}(y))-f(y,u(y))}{u_{\lambda}(y)-u(y)}
\omega^{\lambda}(y)dy\\
\nonumber &&+C(\delta(\varepsilon))\int_{\left(B_{\lambda^{2}}\setminus\overline{B_{\lambda}}\right)^{+}\setminus B_{\lambda^{2}\delta(\varepsilon)}(x)}\frac{f(y,u_{\lambda}(y))-f(y,u(y))}{u_{\lambda}(y)-u(y)}
\omega^{\lambda}(y)dy,
\end{eqnarray}
where we have used the subcritical condition on $f(x,u)$ for $\mu=\left(\frac{\lambda}{|y|}\right)^{2}<1$ to derive the second inequality and the assumption $(\mathbf{f_{1}})$ on $f(x,u)$ to derive the third inequality.

By Hardy-Littlewood-Sobolev inequality, H\"{o}lder inequality and \eqref{2-37-ec}, we have, for any $\frac{1}{\varepsilon}<q<+\infty$,
\begin{eqnarray}\label{3-14-ec}
 && \|\omega^{\lambda}\|_{L^{q}((B_{\lambda^{2}}\setminus\overline{B_{\lambda}})^{+})}\\
 \nonumber &\leq& C\lambda^{4\varepsilon}\left\|\frac{f(y,u_{\lambda}(y))-f(y,u(y))}{u_{\lambda}(y)-u(y)}
\omega^{\lambda}(y)\right\|_{L^{\frac{nq}{n+(n-2\varepsilon)q}}((B_{\lambda^{2}}\setminus\overline{B_{\lambda}})^{+})}\\
\nonumber &&+C(\delta(\varepsilon))\,\left|\left(B_{\lambda^{2}}\setminus\overline{B_{\lambda}}\right)^{+}\right|^{\frac{1}{q}}
\int_{\left(B_{\lambda^{2}}\setminus\overline{B_{\lambda}}\right)^{+}}\left|\frac{f(y,u_{\lambda}(y))-f(y,u(y))}{u_{\lambda}(y)-u(y)}\right|\omega^{\lambda}(y)dy\\
  \nonumber &\leq& C\lambda^{4\varepsilon}
  \left\|\frac{f(y,u_{\lambda}(y))-f(y,u(y))}{u_{\lambda}(y)-u(y)}\right\|_{L^{\frac{n}{n-2\varepsilon}}((B_{\lambda^{2}}\setminus\overline{B_{\lambda}})^{+})}
  \|\omega^{\lambda}\|_{L^{q}((B_{\lambda^{2}}\setminus\overline{B_{\lambda}})^{+})}\\
  \nonumber &&+C(\delta(\varepsilon))\,\left|\left(B_{\lambda^{2}}\setminus\overline{B_{\lambda}}\right)^{+}\right|^{\frac{1}{q}}
  \left\|\frac{f(y,u_{\lambda}(y))-f(y,u(y))}{u_{\lambda}(y)-u(y)}\right\|_{L^{\frac{q}{q-1}}((B_{\lambda^{2}}\setminus\overline{B_{\lambda}})^{+})}
  \|\omega^{\lambda}\|_{L^{q}((B_{\lambda^{2}}\setminus\overline{B_{\lambda}})^{+})}.
\end{eqnarray}
Since $\theta<1$ in the assumption $(\mathbf{f_{2}})$, we first choose $\varepsilon>0$ sufficiently small such that $-\frac{n\theta}{n-2\varepsilon}>-1$, then choose $q>\frac{1}{\varepsilon}$ sufficiently large such that $-\frac{q\theta}{q-1}>-1$. Then, since $u\in C(\overline{\Omega_{1}})$ and $f(x,u)$ satisfies the assumption $(\mathbf{f_{2}})$, there exists a $\delta_{0}>0$ small enough, such that
\begin{eqnarray}\label{3-15-ec}
 && C\lambda^{4\varepsilon}
  \left\|\frac{f(y,u_{\lambda}(y))-f(y,u(y))}{u_{\lambda}(y)-u(y)}\right\|_{L^{\frac{n}{n-2\varepsilon}}((B_{\lambda^{2}}\setminus\overline{B_{\lambda}})^{+})}\\
 \nonumber &&\qquad+C(\delta(\varepsilon))\,\left|\left(B_{\lambda^{2}}\setminus\overline{B_{\lambda}}\right)^{+}\right|^{\frac{1}{q}}
  \left\|\frac{f(y,u_{\lambda}(y))-f(y,u(y))}{u_{\lambda}(y)-u(y)}\right\|_{L^{\frac{q}{q-1}}((B_{\lambda^{2}}\setminus\overline{B_{\lambda}})^{+})}\leq\frac{1}{2}
\end{eqnarray}
for all $1<\lambda\leq1+\delta_{0}$, and hence \eqref{3-14-ec} implies
\begin{equation}\label{3-16-ec}
  \|\omega^{\lambda}\|_{L^{q}((B_{\lambda^{2}}\setminus\overline{B_{\lambda}})^{+})}=0,
\end{equation}
which means $(B_{\lambda^{2}}\setminus\overline{B_{\lambda}})^{+}=\emptyset$. Therefore, we have proved for all $1<\lambda\leq1+\delta_{0}$, $(B_{\lambda^{2}}\setminus\overline{B_{\lambda}})^{+}=\emptyset$, that is,
\begin{equation}\label{3-17-ec}
  \omega^{\lambda}(x)\leq0, \,\,\,\,\,\,\, \forall \, x\in B_{\lambda^{2}}(0)\setminus\overline{B_{\lambda}(0)}.
\end{equation}
This completes Step 1.

\emph{Step 2. Dilate the circle $S_{\lambda}$ outward until $\lambda=+\infty$ to derive lower bound estimates of $u$ for $|x|$ large.} Step 1 provides us a start point to dilate the circle $S_{\lambda}$ from near $\lambda=1$. Now we dilate the circle $S_{\lambda}$ outward as long as \eqref{2-7-ec} holds. Let
\begin{equation}\label{2-29-ec}
  \lambda_{0}:=\sup\{1<\lambda<+\infty\,|\, \omega^{\mu}\leq0 \,\, in \,\, B_{\mu^{2}}(0)\setminus\overline{B_{\mu}(0)}, \,\, \forall \, 1<\mu\leq\lambda\}\in(1,+\infty],
\end{equation}
and hence, one has
\begin{equation}\label{2-30-ec}
  \omega^{\lambda_{0}}(x)\leq0, \quad\quad \forall \,\, x\in B_{\lambda_{0}^{2}}(0)\setminus\overline{B_{\lambda_{0}}(0)}.
\end{equation}
In what follows, we will prove $\lambda_{0}=+\infty$ by contradiction arguments.

Suppose on contrary that $1<\lambda_{0}<+\infty$. In order to get a contradiction, we will first prove
\begin{equation}\label{2-31-ec}
  \omega^{\lambda_{0}}(x)\equiv0, \,\,\,\,\,\,\forall \, x\in B_{\lambda_{0}^{2}}(0)\setminus\overline{B_{\lambda_{0}}(0)}
\end{equation}
by using contradiction arguments.

Suppose on contrary that \eqref{2-31-ec} does not hold, that is, $\omega^{\lambda_{0}}\leq0$ but $\omega^{\lambda_{0}}$ is not identically zero in $B_{\lambda_{0}^{2}}(0)\setminus\overline{B_{\lambda_{0}}(0)}$, then there exists a $x^{0}\in B_{\lambda_{0}^{2}}(0)\setminus\overline{B_{\lambda_{0}}(0)}$ such that $\omega^{\lambda_{0}}(x^{0})<0$. We will obtain a contradiction with \eqref{2-29-ec} via showing that the circle $S_{\lambda}$ can be dilated outward a little bit further, more precisely, there exists a $\epsilon>0$ small enough such that $\omega^{\lambda}\leq0$ in $B_{\lambda^{2}}(0)\setminus\overline{B_{\lambda}(0)}$ for all $\lambda\in[\lambda_{0},\lambda_{0}+\epsilon]$.

For that purpose, we will first show that
\begin{equation}\label{2-32-ec}
  \omega^{\lambda_{0}}(x)<0, \,\,\,\,\,\, \forall \, x\in B_{\lambda_{0}^{2}}(0)\setminus\overline{B_{\lambda_{0}}(0)}.
\end{equation}
Indeed, since we have assumed there exists a point $x^{0}\in B_{\lambda_{0}^{2}}(0)\setminus\overline{B_{\lambda_{0}}(0)}$ such that $\omega^{\lambda_{0}}(x^{0})<0$, by continuity, there exists a small $\delta>0$ and a constant $c_{0}>0$ such that
\begin{equation}\label{2-33-ec}
B_{\delta}(x^{0})\subset B_{\lambda_{0}^{2}}(0)\setminus\overline{B_{\lambda_{0}}(0)} \,\,\,\,\,\, \text{and} \,\,\,\,\,\,
\omega^{\lambda_{0}}(x)\leq -c_{0}<0, \,\,\,\,\,\,\,\, \forall \, x\in B_{\delta}(x^{0}).
\end{equation}
Since $f(x,u)$ is subcritical and satisfies the assumption $(\mathbf{f_{1}})$, one can derive from \eqref{2-33-ec}, Lemma \ref{G-ec} and \eqref{2-37-ec} that, for any $x\in B_{\lambda_{0}^{2}}(0)\setminus\overline{B_{\lambda_{0}}(0)}$,
\begin{eqnarray}\label{9-37-ec}
  &&\omega^{\lambda_{0}}(x)=u_{\lambda_{0}}(x)-u(x) \\
  \nonumber &\leq&\int_{\lambda_{0}<|y|<\lambda_{0}^{2}}\left[G_{2}(x,y)-G_{2}(x^{\lambda_{0}},y)\right] \left[\left(\frac{\lambda_{0}}{|y|}\right)^{4}f(y^{\lambda_{0}},u_{\lambda_{0}}(y))-f(y,u(y))\right]dy\\
\nonumber &<&\int_{B_{\delta}(x_{0})}\left(G_{2}(x,y)-G_{2}(x^{\lambda_{0}},y)\right)\left(f(y,u_{\lambda_{0}}(y))-f(y,u(y))\right)dy\leq0,
\end{eqnarray}
thus we arrive at \eqref{2-32-ec}.

Now, we choose a $0<r_{0}<\frac{1}{4}\min\{\lambda_{0}^{2}-\lambda_{0},1\}$ small enough, such that
\begin{eqnarray}\label{9-0ec}
 && C\lambda^{4\varepsilon}
  \left\|\frac{f(y,u_{\lambda}(y))-f(y,u(y))}{u_{\lambda}(y)-u(y)}\right\|_{L^{\frac{n}{n-2\varepsilon}}\big(A_{\lambda_{0}+r_{0},r_{0}}\cup A_{\lambda_{0}^{2}+r_{0},2r_{0}}\big)}\\
 \nonumber &&\quad+C(\delta(\varepsilon))\,\left|\left(B_{\lambda^{2}}\setminus\overline{B_{\lambda}}\right)^{+}\right|^{\frac{1}{q}}
  \left\|\frac{f(y,u_{\lambda}(y))-f(y,u(y))}{u_{\lambda}(y)-u(y)}\right\|_{L^{\frac{q}{q-1}}\big(A_{\lambda_{0}+r_{0},r_{0}}\cup A_{\lambda_{0}^{2}+r_{0},2r_{0}}\big)}\leq\frac{1}{2}
\end{eqnarray}
for any $\lambda\in[\lambda_{0},\lambda_{0}+\frac{r_{0}}{4}]$, where the choices of $\varepsilon$, $q$ and the constants $C$, $C(\delta(\varepsilon))$ are the same as in \eqref{3-15-ec}. By \eqref{2-37-ec}, one can easily verify that inequality as \eqref{3-14-ec} (with the same constants $C$ and $C(\delta(\varepsilon))$) also holds for any $\lambda\in[\lambda_{0},\lambda_{0}+\frac{r_{0}}{4}]$, that is, for any $\frac{1}{\varepsilon}<q<+\infty$,
\begin{eqnarray}\label{9-2ec}
 && \|\omega^{\lambda}\|_{L^{q}((B_{\lambda^{2}}\setminus\overline{B_{\lambda}})^{+})}\\
 \nonumber &\leq& C\lambda^{4\varepsilon}
  \left\|\frac{f(y,u_{\lambda}(y))-f(y,u(y))}{u_{\lambda}(y)-u(y)}\right\|_{L^{\frac{n}{n-2\varepsilon}}((B_{\lambda^{2}}\setminus\overline{B_{\lambda}})^{+})}
  \|\omega^{\lambda}\|_{L^{q}((B_{\lambda^{2}}\setminus\overline{B_{\lambda}})^{+})}\\
  \nonumber &&+C(\delta(\varepsilon))\,\left|\left(B_{\lambda^{2}}\setminus\overline{B_{\lambda}}\right)^{+}\right|^{\frac{1}{q}}
  \left\|\frac{f(y,u_{\lambda}(y))-f(y,u(y))}{u_{\lambda}(y)-u(y)}\right\|_{L^{\frac{q}{q-1}}((B_{\lambda^{2}}\setminus\overline{B_{\lambda}})^{+})}
  \|\omega^{\lambda}\|_{L^{q}((B_{\lambda^{2}}\setminus\overline{B_{\lambda}})^{+})}.
\end{eqnarray}

By \eqref{2-32-ec}, we can define
\begin{equation}\label{2-40-ec}
  M_{1}:=\sup_{x\in \overline{B_{\lambda_{0}^{2}-r_{0}}(0)}\setminus B_{\lambda_{0}+r_{0}}(0)}\omega^{\lambda_{0}}(x)<0.
\end{equation}
Since $u$ is uniformly continuous on arbitrary compact set $K\subset\overline{\Omega_{1}}$, we can deduce from \eqref{2-40-ec} that, there exists a $0<\epsilon_{1}<\frac{r_{0}}{4}$ sufficiently small, such that, for any $\lambda\in[\lambda_{0},\lambda_{0}+\epsilon_{1}]$,
\begin{equation}\label{2-41-ec}
  \omega^{\lambda}(x)\leq\frac{M_{1}}{2}<0, \,\,\,\,\,\, \forall \, x\in \overline{B_{\lambda_{0}^{2}-r_{0}}(0)}\setminus B_{\lambda_{0}+r_{0}}(0).
\end{equation}

For any $\lambda\in[\lambda_{0},\lambda_{0}+\epsilon_{1}]$, it follows from \eqref{2-41-ec} that
\begin{equation}\label{9-4ec}
  (B_{\lambda^{2}}\setminus\overline{B_{\lambda}})^{+}\subset A_{\lambda_{0}+r_{0},r_{0}}\cup A_{\lambda_{0}^{2}+r_{0},2r_{0}}.
\end{equation}
As a consequence of \eqref{9-0ec}, \eqref{9-2ec} and \eqref{9-4ec}, we get
\begin{equation}\label{9-5ec}
  \|\omega^{\lambda}\|_{L^{q}\left((B_{\lambda^{2}}\setminus\overline{B_{\lambda}})^{+}\right)}=0,
\end{equation}
and hence $(B_{\lambda^{2}}\setminus\overline{B_{\lambda}})^{+}=\emptyset$ for all $\lambda\in[\lambda_{0},\lambda_{0}+\epsilon_{1}]$, that is,
\begin{equation}\label{2-45-ec}
  \omega^{\lambda}(x)\leq0, \,\,\,\,\,\,\, \forall \,\, x\in B_{\lambda^{2}}(0)\setminus\overline{B_{\lambda}(0)},
\end{equation}
which contradicts with the definition \eqref{2-29-ec} of $\lambda_{0}$. As a consequence, in the case $1<\lambda_{0}<+\infty$, \eqref{2-31-ec} must hold true, that is,
\begin{equation}\label{2-46-ec}
  \omega^{\lambda_{0}}\equiv0 \,\,\,\,\,\, \text{in} \,\,\, B_{\lambda_{0}^{2}}(0)\setminus\overline{B_{\lambda_{0}}(0)}.
\end{equation}

However, by subcritical condition on $f(x,u)$, the first inequality in \eqref{9-37-ec} and \eqref{2-46-ec}, we arrive at
\begin{eqnarray}\label{2-47-ec}
 && 0=\omega^{\lambda_{0}}(x)=u_{\lambda_{0}}(x)-u(x)\\
 \nonumber &\leq&\int_{\lambda_{0}<|y|<\lambda_{0}^{2}}\left[G_{2}(x,y)-G_{2}(x^{\lambda_{0}},y)\right] \left[\left(\frac{\lambda_{0}}{|y|}\right)^{4}f(y^{\lambda_{0}},u_{\lambda_{0}}(y))-f(y,u(y))\right]dy\\
\nonumber &<&\int_{\lambda_{0}<|y|<\lambda_{0}^{2}}\left(G_{2}(x,y)-G_{2}(x^{\lambda_{0}},y)\right)\left(f(y,u_{\lambda_{0}}(y))-f(y,u(y))\right)dy=0
\end{eqnarray}
for any $x\in B_{\lambda_{0}^{2}}(0)\setminus\overline{B_{\lambda_{0}}(0)}$, which is absurd. Thus we must have $\lambda_{0}=+\infty$, that is,
\begin{equation}\label{2-48-ec}
  u(x)\geq u\left(\frac{\lambda^{2}x}{|x|^{2}}\right), \quad\quad \forall \,\, \lambda\leq|x|\leq\lambda^{2}, \quad \forall \,\, 1<\lambda<+\infty.
\end{equation}
For arbitrary $\sigma\leq|x|<+\infty$, let $1<\sqrt{|x|}<\lambda:=\sqrt{\sigma|x|}\leq|x|<+\infty$, then \eqref{2-48-ec} yields that
\begin{equation}\label{2-49-ec}
  u(x)\geq u\left(\frac{\sigma x}{|x|}\right),
\end{equation}
and hence, we arrive at the following lower bound estimate of $u$:
\begin{equation}\label{2-50-ec}
  u(x)\geq\min_{x\in S_{\sigma}}u(x):=C_{0}>0, \quad\quad \forall \,\, \sigma\leq|x|<\infty.
\end{equation}

This finishes our proof of Theorem \ref{lower-ec}.
\end{proof}

Since $-2\leq\tau<+\infty$ in assumption $(\mathbf{f_{3}})$, we can deduce from the assumption $(\mathbf{f_{3}})$ on $f(x,u)$, the integral equations \eqref{IEe}, Lemma \ref{G-ec} and Theorem \ref{lower-ec} that, for any $2\sigma\leq|x|<+\infty$,
\begin{eqnarray}\label{2-51-ec'}
  u(x)&\geq&\overline{C}\int_{\mathcal{C}\cap\{|y|\geq2|x|\}}G_{2}(x,y)|y|^{\tau}C^{p}_{0}dy \\
  \nonumber &\geq&C\int_{\mathcal{C}\cap\{|y|\geq2|x|\}}|y|^{\tau}dy=+\infty,
\end{eqnarray}
which is a contradiction! Therefore, we must have $u\equiv0$ in $\overline{\Omega_{1}}$, that is, the unique nonnegative solution to IEs \eqref{IEe} with $\alpha=n=2$ is $u\equiv0$ in $\overline{\Omega_{1}}$.

This concludes our proof of Theorem \ref{Thm0}.

\end{document}